\title{Intersection probabilities and kinematic formulas\\for polyhedral cones}
\author{Rolf Schneider}
\date{}
\newcommand{\Sd}{{\mathbb S}^{d-1}}
\newcommand{\R}{{\mathbb R}}
\newcommand{\C}{{\mathcal C}}
\newcommand{\bP}{{\mathbb P}}
\newcommand{\Rd}{{\mathbb R}^d}
\newcommand{\N}{{\mathbb N}}
\newcommand{\Ha}{\mathcal{H}}
\newcommand{\B}{\mathcal{B}}
\newcommand{\D}{{\rm d}}
\newcommand{\F}{{\mathcal F}}
\newcommand{\PC}{{\mathcal PC}}
\newcommand{\bE}{{\mathbb E}\,}
\newcommand{\bQ}{{\mathbb Q}}
\newcommand{\g}{{\bf g}}
  \renewcommand{\dim}{{\rm dim}\,}
  \newcommand{\relint}{{\rm relint}\,}
  \newcommand{\lin}{{\rm lin}\,}
\newtheorem{theorem}{Theorem}[section]
\newtheorem{lemma}{Lemma}[section]
\begin{document}
\maketitle

\begin{abstract}
For polyhedral convex cones in $\R^d$, we give a proof for the conic kinematic formula for conic curvature measures, which avoids the use of characterization theorems. For the random cones defined as typical cones of an isotropic random central hyperplane arrangement, we find probabilities for non-trivial intersection, either with a fixed cone, or for two independent random cones of this type.\\[1mm]
{\bf Keywords:} Polyhedral cone,  conic curvature measure, kinematic formula, intersection probability, central hyperplane arrangement\\[1mm]
{\bf Mathematics Subject Classification:} Primary 52A22, Secondary 52A55
\end{abstract}

\section{Introduction}\label{sec1}

The use of convex optimization for signal demixing under a certain random model has brought into focus the following geometric question, as formulated in \cite[p. 518]{MT14a}: ``When does a randomly oriented cone strike a fixed cone?'' More precisely, let $C,D\subset \R^d$ be closed convex cones, not both subspaces. Let $\bm \theta$ be a uniform random rotation, that is, a random element of the rotation group ${\rm SO}_d$ of $\R^d$ with distribution equal to the normalized Haar measure $\nu$ on ${\rm SO}_d$. The question asks for the probability
\begin{equation}\label{1.1} 
\bP\{C\cap {\bm \theta} D\not=\{o\}\}.
\end{equation}
Various aspects of this question have recently received considerable attention in connection with conic optimization; see, for example, \cite{AB12, AB15, ALMT14, MT14a, MT13}.

The natural approach to the evaluation of the probability (\ref{1.1}) uses spherical, or conic, integral geometry. For this, one needs the conic intrinsic volumes $V_1(C),\dots,V_d(C)$ of a closed convex cone $C\subset\R^d$ (see Section \ref{sec3}). The conic kinematic formula provides the expectation
\begin{equation}\label{1.2}
\bE V_k(C\cap {\bm \theta} D)= \sum_{i=k}^d V_i(C)V_{d+k-i}(D)
\end{equation}
for $k=1,\dots,d$. A version of the spherical Gauss--Bonnet theorem says that
\begin{equation}\label{1.3} 
2\sum_{k=0}^{\lfloor\frac{d-1}{2}\rfloor} V_{2k+1}(C)=1,\quad\mbox{if $C$ is not a subspace}.
\end{equation}
(See, e.g., \cite[Thm. 6.5.5]{SW08}, for an equivalent formulation in the spherical setting. Differential-geometric versions of the formula appear in early work of Santal\'{o}, e.g. \cite{San62a}; for a combinatorial approach, see McMullen \cite{McM75}.) 
Since $C\cap{\bm \theta} D$ is, with probability one, either $\{o\}$ (in which case $V_k(C\cap{\bm \theta} D)=0$ for $k\ge 1$) or not a subspace, (\ref{1.3}) implies that
$$ {\mathbbm 1}\{C\cap {\bm \theta} D\not=\{o\}\} = 2\sum_{k=0}^{\lfloor\frac{d-1}{2}\rfloor} V_{2k+1}(C\cap {\bm \theta} D)\quad\mbox{almost surely}.$$
Therefore, (\ref{1.2}) yields
\begin{equation}\label{1.4} 
\bP\{C\cap{\bm \theta} D\not=\{o\}\} = 2\sum_{k= 0}^{\lfloor\frac{d-1}{2}\rfloor} \sum_{i=2k+1}^d V_i(C)V_{d+2k+1-i}(D).
\end{equation}
A major concern of the quoted investigations is the computation of the conic intrinsic volumes for some special cones, or their estimation in the case of general cones.

In the present paper, we restrict ourselves to polyhedral cones and are interested in variants of (\ref{1.2}) and (\ref{1.4}), which are of interest from the viewpoint of stochastic and integral geometry. First, the conic kinematic formula (\ref{1.2}) has a local version. It involves the conic curvature measures $\Phi_1(C,\cdot),\dots,\Phi_d(C,\cdot)$ of a polyhedral cone $C$. These are finite measures on the $\sigma$-algebra $\widehat\B(\R^d)$ of conic Borel sets in $\R^d$ (see Section \ref{sec3}). Since $\Phi_k(C,\R^d)=V_k(C)$, the global case (i.e., $A=B=\R^d$) of the following result yields (\ref{1.2}).

\begin{theorem}\label{T1.1}
Let $C,D\subset\R^d$ be polyhedral cones, and let $A,B\in\widehat\B(\R^d)$ be conic Borel sets. Then
\begin{equation}\label{4.5} 
\int_{{\rm SO}_d} \Phi_k(C\cap\vartheta D,A\cap\vartheta B)\,\nu(\D\vartheta) =\sum_{i=k}^d \Phi_i(C,A)\Phi_{d+k-i}(D,B)
\end{equation}
for $k=1,\dots,d$.
\end{theorem}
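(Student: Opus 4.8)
The plan is to prove the formula directly from the face structure of polyhedral cones, which is exactly the route that avoids any characterization theorem: localize the classical face-by-face argument and carry out the rotation average explicitly. First I would use the representation of the curvature measure recalled in Section~\ref{sec3}, which for a polyhedral cone $C$ reads
\[
\Phi_k(C,A)=\frac{1}{\omega_k\omega_{d-k}}\sum_{F\in\mathcal F_k(C)}\Ha^{d-k-1}\big(N(C,F)\cap\Sd\big)\,\Ha^{k-1}\big(\relint F\cap A\cap\Sd\big),
\]
where $\mathcal F_k(C)$ is the set of $k$-dimensional faces of $C$, $N(C,F)$ its normal cone, and $\omega_j$ the surface area of the unit sphere of a $j$-dimensional space. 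The point-part and the normal-part (external angle) are thus cleanly separated.

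Second I would establish the generic combinatorial structure of the intersection. For $\nu$-almost all $\vartheta\in\SO_d$ the cones $C$ and $\vartheta D$ are in general position, so the nonempty faces of $C\cap\vartheta D$ are precisely the sets $F\cap\vartheta G$ with $F\in\mathcal F(C)$, $G\in\mathcal F(D)$ and $\dim(F\cap\vartheta G)=\dim F+\dim G-d$; moreover $N(C\cap\vartheta D,F\cap\vartheta G)=N(C,F)+\vartheta N(D,G)$, the sum being direct. Hence the $k$-faces of the intersection come exactly from pairs with $\dim F=i$, $\dim G=d+k-i$, $i=k,\dots,d$. Inserting the face representation for $\Phi_k(C\cap\vartheta D,A\cap\vartheta B)$ and noting that $A\cap\vartheta B$ restricts the point-part to $\relint(F\cap\vartheta G)\cap A\cap\vartheta B\cap\Sd$, the left-hand side of the claim becomes a finite sum, over $i$ and over pairs $(F,G)$, of the rotation integrals
\[
\frac{1}{\omega_k\omega_{d-k}}\int_{\SO_d}\Ha^{d-k-1}\big((N(C,F)+\vartheta N(D,G))\cap\Sd\big)\,\Ha^{k-1}\big(\relint(F\cap\vartheta G)\cap A\cap\vartheta B\cap\Sd\big)\,\nu(\D\vartheta).
\]

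The crux is to show that each such integral factorizes into the contribution of $F$ to $\Phi_i(C,A)$ times the contribution of $G$ to $\Phi_{d+k-i}(D,B)$. Here the point-part $\relint(F\cap\vartheta G)$ lies in $M:=\lin F\cap\vartheta\lin G$ (dimension $k$) while the normal-part $N(C,F)+\vartheta N(D,G)$ lies in $M^\perp$ (dimension $d-k$); the difficulty is that the two factors are coupled through the single rotation $\vartheta$, and since the spanning spaces $\lin F^\perp$ and $\vartheta\lin G^\perp$ of the two normal cones are \emph{not} orthogonal, the angle-part genuinely depends on $\vartheta$ and factorizes only after integration. To disentangle this I would use a spherical Blaschke--Petkantschin decomposition of Haar measure adapted to the incidence $M=\lin F\cap\vartheta\lin G$: integrate first over the position of the $k$-plane $M$ (equivalently, over $\vartheta\lin G$ relative to the fixed $\lin F$) with the invariant measure on the relevant Grassmannian, and then over the residual rotations fixing that configuration. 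Under this decomposition the point-measure on $M\cap\Sd$ and the angle-measure on $M^\perp\cap\Sd$ separate; the fixed set $A$ pairs with $F$ and $B$ pairs with $G$, and each factor splits further via the orthogonal-join identity
\[
\Ha^{a+b-1}\big((K_1+K_2)\cap\Sd\big)=\tfrac12 B\big(\tfrac a2,\tfrac b2\big)\,\Ha^{a-1}(K_1\cap\Sd)\,\Ha^{b-1}(K_2\cap\Sd)
\]
valid for cones $K_1,K_2$ in orthogonal subspaces of dimensions $a,b$. Collecting the spherical measures reproduces $\Ha^{d-i-1}(N(C,F)\cap\Sd)\,\Ha^{i-1}(\relint F\cap A\cap\Sd)$ and $\Ha^{i-k-1}(N(D,G)\cap\Sd)\,\Ha^{d+k-i-1}(\relint G\cap B\cap\Sd)$, i.e.\ exactly the face contributions to $\Phi_i(C,A)$ and $\Phi_{d+k-i}(D,B)$.

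I expect the main obstacle to be precisely this master integral identity: setting up the Blaschke--Petkantschin / Haar decomposition rigorously (including the Jacobian linking the rotation integral to the Grassmannian and residual integrals) and then checking that the Grassmannian normalization combines with the Beta-factors and with the prefactor $1/(\omega_k\omega_{d-k})$ to produce exactly $1/(\omega_i\omega_{d-i})\cdot 1/(\omega_{i-k}\omega_{d+k-i})$, so that every summand enters the kinematic formula with coefficient one. All the Gamma-function constants must cancel, and verifying this cancellation is the genuinely computational part. Everything else — the face representation, the generic-position statement, and the normal-cone sum formula — is routine polyhedral geometry.
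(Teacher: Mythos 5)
Your reduction of the left-hand side to a sum over transverse pairs of faces is exactly the paper's first step (its Lemmas \ref{L3.1a} and \ref{L3.2}), and your spherical face representation of $\Phi_k$, as well as the orthogonal-join identity with the Beta-factor, are correct restatements of the paper's Gaussian formulas (\ref{3.2}) and (\ref{2.0}). But at the crux the proposal stops being a proof and becomes a declaration of intent. The factorization of
\[
\int_{\SO_d}\Ha^{d-k-1}\bigl((N(C,F)+\vartheta N(D,G))\cap\Sd\bigr)\,
\Ha^{k-1}\bigl(\relint(F\cap\vartheta G)\cap A\cap\vartheta B\cap\Sd\bigr)\,\nu(\D\vartheta)
\]
into the two face contributions is not a technical verification that can be deferred: once the face decomposition is in place, this identity \emph{is} the theorem. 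You name a tool (a spherical Blaschke--Petkantschin decomposition adapted to $M=\lin F\cap\vartheta\lin G$), but you do not state the decomposition, compute its Jacobian, or show that the point-part and angle-part actually separate under it; you explicitly leave the cancellation of all constants as an expectation. Note that two distinct difficulties are hidden here: the angle-part $N(C,F)+\vartheta N(D,G)$ is a \emph{non-orthogonal} direct sum, whose spherical (equivalently Gaussian) measure does not factorize pointwise for any fixed $\vartheta$, so your join identity never applies directly to it; and the point-part $\relint(F\cap\vartheta G)\cap A\cap\vartheta B$ must, after integration, produce the product of the $A$-localized measure of $F$ with the $B$-localized measure of $G$. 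Each of these requires its own integral-geometric identity, with its own normalization, and neither is established.

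It is worth contrasting this with how the paper handles precisely this step, because the contrast explains why the paper's route avoids your ``genuinely computational part'' altogether. Writing the integrand via Lemma \ref{L3.3} as $\Phi_k(F\cap\vartheta G,A\cap\vartheta B)\,V_{d-k}(N(C,F)+\vartheta N(D,G))$, the paper replaces $\vartheta$ by $\rho\vartheta$ with $\rho\in\SO_{L(F)}$ and averages over $\nu_{L(F)}$: since $\rho$ fixes $N(C,F)$ pointwise and $V_{d-k}$ is rotation invariant, the angle factor is untouched, while the point factor is averaged inside $L(F)$, where the special-case Theorem \ref{T4.1a} (itself proved by the same averaging device, using only the invariance property (\ref{2.4})) extracts $\Phi_i(F,A)$; a second average over $\SO_{L(G)}$ extracts $\Phi_j(G,B)$; and the remaining integral $\int_{\SO_d}V_{d-k}(N(C,F)+\vartheta N(D,G))\,\nu(\D\vartheta)$ is exactly Theorem \ref{T4.b}. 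No Jacobian and no Gamma-function constant ever appears, because every measure involved is a probability measure and only invariance is used. If you insist on the Hausdorff-measure formulation, you must either carry out the Blaschke--Petkantschin computation in full, including the Jacobian involving the generalized sine function $[L(F),\vartheta L(G)]$, or prove the two auxiliary factorization identities by an invariance argument---at which point you will have reconstructed Theorems \ref{T4.1a} and \ref{T4.b}. As it stands, the central identity is asserted rather than proved, so the proposal has a genuine gap.
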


Theorem \ref{T1.1} has an equivalent formulation within spherical integral geometry, for  spherically convex polytopes in the unit sphere $\Sd$ of $\R^d$ and their spherical curvature measures. In this form, the theorem was proved, in two different ways, by Glasauer \cite{Gla95}. (A summary of this thesis appears in \cite{Gla96}. For smooth submanifolds, spherical integral geometry goes back to Santal\'{o}; see, e.g., \cite{San62b, San76}.) Glasauer's first proof \cite[Thm. 5.1.1]{Gla95} is based on an axiomatic characterization of the spherical curvature measures (similar to the treatment of the Euclidean case in \cite[Sect. 7]{Sch78}). This proof is reproduced in \cite[Thm. 6.5.6]{SW08}. Glasauer's second proof \cite[Thm. 6.1.1]{Gla95} was obtained by specializing his kinematic formula for spherical support measures. For the proof of the latter, Glasauer used his new axiomatic characterization of these support measures. This second proof was transferred to the conic situation, and the result expanded, by Amelunxen \cite{Ame15}. Also the local conic kinematic formula has proved relevant for applications, see \cite{AB15}.

The approach to the mentioned proofs of the local kinematic formula, via a characterization theorem for curvature measures or support measures, may be elegant (and has important  examples in Hadwiger's work), but it has its limits. For instance, it seems to be inefficient for the extension of kinematic formulas to the tensor-valued generalizations of the intrinsic volumes. For these, the existing proofs in the Euclidean case are still rather complicated, see \cite{HSS08} and \cite{HW16}. For that reason, approaches to local kinematic formulas by direct computation should be given more attention. For the global conic kinematic formula, an elegant proof of this kind has recently been found by Amelunxen and Lotz \cite{AL15}. We extend their approach here to a proof of Theorem \ref{T1.1}. There is some hope that this approach will lead to simplifications in the integral geometry of tensor valuations.

We restrict ourselves here to polyhedral convex cones. The extension to general closed convex cones, where possible, can be done along the lines of Glasauer's \cite{Gla95} argumentation, properly transferred to the conic case.

Our second aim in this paper is a variant of (\ref{1.4}). This formula gives the probability of non-trivial intersection of a random cone with a fixed cone. The random cone is of a special type: the randomness comes only from the random rotation, which is applied to a fixed cone. It is certainly of interest to have similar results for more flexible types of random cones, where also the shape can be random and not only the position. On the other hand, it is to be expected that only very special models can lead to explicit results. We observe here that such explicit results can be obtained for the isotropic random Schl\"afli cones, which were studied in \cite{HS16}. 

To recall their definition, let $H_1,\dots,H_n$ be hyperplanes in $\R^d$ through the origin $o$ which are in general position, that is, any $k\le d$ of them have an intersection of dimension $d-k$. The {\em Schl\"afli cones} induced by $H_1,\dots,H_n$ are the closures of the components of $\R^d\setminus\bigcup_{i=1}^n H_i$. The name has been chosen since Schl\"afli has proved that there are exactly
\begin{equation}\label{1.0} 
C(n,d):= 2\sum_{r=0}^{d-1} \binom{n-1}{r}
\end{equation}
of them. If we take $n$ stochastically independent, isotropic random hyperplanes through $o$ and choose at random, with equal chances, one of the Schl\"afli cones induced by them, then this defines the isotropic random Schl\"afli cone $S_n$, with parameter $n$.

We show the following counterpart to Theorem \ref{T1.1}, where the randomly rotated cone is now replaced by an isotropic random Schl\"afli cone.

\begin{theorem}\label{T1.2}
Let $C\subset\R^d$ be a polyhedral cone and let $A\in\widehat\B(\R^d)$ be a conic Borel set. Let $n\in \N$ and $k\in\{1,\dots,d\}$. Then
$$ \bE\Phi_k(C\cap S_n,A) = \frac{1}{C(n,d)} \sum_{s=0}^{\min\{n,d-k\}} \binom{n}{s} \Phi_{k+s}(C,A).$$
\end{theorem}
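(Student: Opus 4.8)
The plan is to establish the unnormalised identity for the sum over \emph{all} Schl\"afli cones and then divide by their number. Almost surely the hyperplanes $H_1,\dots,H_n$ are in general position, so the number of induced Schl\"afli cones equals $C(n,d)$; writing $S^{(1)},\dots,S^{(C(n,d))}$ for these cones and recalling that $S_n$ is one of them chosen with equal probabilities,
\begin{equation*}
\bE\Phi_k(C\cap S_n,A)=\frac{1}{C(n,d)}\,\Psi^{(n)}_k(C,A),\qquad \Psi^{(n)}_k(C,A):=\bE\sum_{j=1}^{C(n,d)}\Phi_k(C\cap S^{(j)},A).
\end{equation*}
It therefore suffices to prove $\Psi^{(n)}_k(C,A)=\sum_{s=0}^{\min\{n,d-k\}}\binom ns\Phi_{k+s}(C,A)$, which I would do by induction on $n$.

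The crucial ingredient is a single-hyperplane formula: for every polyhedral cone $P\subseteq\R^d$, with $H_u^{\pm}:=\{x\in\R^d:\pm\langle x,u\rangle\ge 0\}$ and $u$ uniformly distributed on $\Sd$,
\begin{equation*}
\bE\bigl[\Phi_k(P\cap H_u^+,A)+\Phi_k(P\cap H_u^-,A)\bigr]=\Phi_k(P,A)+\Phi_{k+1}(P,A).\tag{$\ast$}
\end{equation*}
Here I would use Theorem~\ref{T1.1} itself, applied with the cone $P$ in place of $C$, with $D$ a fixed halfspace through $o$, and with $B=\R^d$. Because $u$ is isotropic, the rotated halfspace $\vartheta D$ has the distribution of $H_u^+$, so the left-hand side of (\ref{4.5}) is $\bE\Phi_k(P\cap H_u^+,A)$, while its right-hand side is $\sum_{i=k}^d\Phi_i(P,A)V_{d+k-i}(D)$. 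A halfspace has $V_{d-1}(D)=V_d(D)=\tfrac12$ and all other conic intrinsic volumes zero, so only $i=k,k+1$ survive and we obtain $\bE\Phi_k(P\cap H_u^+,A)=\tfrac12\Phi_k(P,A)+\tfrac12\Phi_{k+1}(P,A)$. The same holds for $H_u^-$ (replace $u$ by $-u$), and adding the two contributions gives $(\ast)$.

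For the inductive step I would condition on $H_1,\dots,H_{n-1}$, denote their Schl\"afli cones by $S'$, and adjoin the independent isotropic hyperplane $H_n=u^\perp$. The cones of the refined arrangement are exactly the full-dimensional cells among $S'\cap H_u^{+}$ and $S'\cap H_u^{-}$, whence
\begin{equation*}
\sum_{S}\Phi_k(C\cap S,A)=\sum_{S'}\bigl[\Phi_k(C\cap S'\cap H_u^+,A)+\Phi_k(C\cap S'\cap H_u^-,A)\bigr]\qquad\text{almost surely},
\end{equation*}
the sum on the left running over the cones $S$ of $H_1,\dots,H_n$. The only point to check is that the terms in which $u^\perp$ misses the interior of a cell $S'$—so that one of $S'\cap H_u^{\pm}$ fails to be full-dimensional—may be added back at no cost: if $S'$ is pointed, such a hyperplane meets it only in $o$, giving $\Phi_k(\{o\},\cdot)=0$, and if $S'$ has a lineality space this occurs only on a null set. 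Taking the expectation over $u$ and applying $(\ast)$ to each $P=C\cap S'$ yields the recursion $\Psi^{(n)}_k=\Psi^{(n-1)}_k+\Psi^{(n-1)}_{k+1}$. With the base case $\Psi^{(0)}_k(C,A)=\Phi_k(C,A)$, the convention $\Phi_j\equiv 0$ for $j>d$, and Pascal's rule $\binom ns=\binom{n-1}s+\binom{n-1}{s-1}$, the asserted formula propagates from $n-1$ to $n$, and the summation range $0\le s\le\min\{n,d-k\}$ is forced by $\binom ns=0$ for $s>n$ and $\Phi_{k+s}=0$ for $s>d-k$.

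I expect the main difficulty to lie not in any single hard estimate but in the careful bookkeeping of the inductive step: verifying that refining the arrangement by one hyperplane produces precisely the full-dimensional cells $S'\cap H_u^{\pm}$, that general position (hence the cell count $C(n,d)$) holds almost surely at every stage, and that the degenerate slices are genuinely negligible, also in the cases where $C\cap S'$ is lower-dimensional or non-pointed. Once these are in place, the entire argument rests on the local kinematic formula of Theorem~\ref{T1.1} applied to a halfspace, combined with the elementary combinatorics of Pascal's triangle.
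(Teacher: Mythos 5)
Your proposal runs on the same engine as the paper's proof: an induction on $n$ whose key step is Theorem \ref{T1.1} applied with $D$ a fixed halfspace (so that only $V_{d-1}(H^-)=V_d(H^-)=\tfrac12$ survive), followed by Pascal's rule; your formula $(\ast)$ is verbatim the computation in the paper. The only genuine difference is the bookkeeping of the unnormalized quantity. The paper uses the distribution formula (\ref{5.1}) together with the reflection invariance of $\sigma$ to collapse the sum over all $2^n$ sign patterns $\epsilon_1u_1^-\cap\dots\cap\epsilon_nu_n^-$ into $2^n$ times the single iterated integral $I(n,k)=\int_{(\Sd)^n}\Phi_k(C\cap u_1^-\cap\dots\cap u_n^-,A)\,\sigma^n(\D(u_1,\dots,u_n))$, whose recursion $I(n,k)=\tfrac12[I(n-1,k)+I(n-1,k+1)]$ drops out of Fubini's theorem with no geometry of cell complexes needed. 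You instead keep the sum over all cells and refine the arrangement one hyperplane at a time, which obliges you to prove the cell-splitting identity and to dispose of the degenerate slices; this buys a more geometric picture but costs extra verifications, and one of yours is wrong.

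The error is the claim that for a \emph{pointed} full-dimensional cell $S'$ with $S'\subseteq H_u^-$ one necessarily has $S'\cap u^\perp=\{o\}$. That is false: take $d=2$, $S'$ the first quadrant and $u=(0,-1)$; then $S'\subseteq H_u^-=\{x_2\ge 0\}$, yet $S'\cap H_u^+=S'\cap u^\perp$ is the entire positive $x_1$-axis, and the corresponding extra term $\Phi_k(C\cap S'\cap H_u^+,A)$ need not vanish. What saves the argument is not pointedness but the same null-set reasoning you invoke in the non-pointed case: the event that $u^\perp$ supports $S'$ in a face $F$ with $\dim F\ge 1$ requires $u$ to lie in the normal cone $N(S',F)$, which has dimension $d-\dim F\le d-1$, hence meets $\Sd$ in a $\sigma$-null set; summing over the finitely many faces of the finitely many cells, the exceptional event has probability zero. (Equivalently: almost surely all $n$ hyperplanes are in general position, and under general position a hyperplane of the arrangement that does not cut the interior of a cell of the sub-arrangement can meet that cell only in $\{o\}$.) With that repair, and using $\Phi_k(\{o\},\cdot)=0$ for $k\ge 1$, your recursion $\Psi^{(n)}_k=\Psi^{(n-1)}_k+\Psi^{(n-1)}_{k+1}$ holds almost surely and the induction goes through exactly as you describe.
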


The following intersection probabilities are obtained.

\begin{theorem}\label{T1.3}
Let $C\subset \R^d$ be a closed convex cone, not a subspace. Then the isotropic random Schl\"afli cone $S_n$ satisfies
$$ \bP\{C\cap S_n\not=\{o\}\}=\frac{2}{C(n,d)} \sum_{j=1}^n \sum_{k=0}^{\lfloor \frac{j-1}{2}\rfloor} \binom{n}{j-2k-1}V_j(C).$$
\end{theorem}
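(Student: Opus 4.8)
The plan is to follow the route that led from \eqref{1.2} to \eqref{1.4}, with the randomly rotated cone replaced by the isotropic random Schl\"afli cone $S_n$ and with Theorem~\ref{T1.2} taking the place of the conic kinematic formula. A first reduction lets me assume that $C$ is polyhedral. Both sides of the claimed identity are continuous in $C$: the right-hand side because the conic intrinsic volumes are continuous functionals, and the left-hand side because the hyperplanes defining $S_n$ have an absolutely continuous directional distribution, so that the configurations in which $C$ meets the arrangement degenerately form a null set and $\bP\{C\cap S_n\neq\{o\}\}$ varies continuously with $C$. Since Theorem~\ref{T1.2} is available for polyhedral cones and these are dense, it suffices to establish the formula for polyhedral $C$ and then extend by continuity.

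The second ingredient is the almost sure dichotomy that $C\cap S_n$ equals $\{o\}$ or is not a subspace. Indeed, a line in $C\cap S_n$ would have to lie both in $C$ and in the lineality space $\bigcap_{i=1}^n H_i$ of $S_n$, and the general position of the $H_i$ excludes this with probability one. Granting the dichotomy, the spherical Gauss--Bonnet relation \eqref{1.3} applied to $C\cap S_n$ yields
\[
{\mathbbm 1}\{C\cap S_n\neq\{o\}\}=2\sum_{k=0}^{\lfloor\frac{d-1}{2}\rfloor}V_{2k+1}(C\cap S_n)\qquad\text{almost surely},
\]
since both sides vanish when $C\cap S_n=\{o\}$ and both equal $1$ otherwise, exactly as in the passage preceding \eqref{1.4}.

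Taking expectations and inserting the global case $A=\R^d$ of Theorem~\ref{T1.2}, where $\Phi_k(\,\cdot\,,\R^d)=V_k(\,\cdot\,)$, I obtain
\[
\bP\{C\cap S_n\neq\{o\}\}=\frac{2}{C(n,d)}\sum_{k=0}^{\lfloor\frac{d-1}{2}\rfloor}\ \sum_{s=0}^{\min\{n,\,d-2k-1\}}\binom{n}{s}\,V_{2k+1+s}(C).
\]
For the final, purely combinatorial step I would set $j=2k+1+s$ and collect the coefficient of $V_j(C)$: the constraint $s\ge0$ becomes $k\le\lfloor\frac{j-1}{2}\rfloor$, the binomial $\binom{n}{j-2k-1}$ vanishes once $j-2k-1>n$, and $s\le d-2k-1$ forces $j\le d$. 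Thus for $1\le j\le d$ the coefficient of $V_j(C)$ equals $\sum_{k=0}^{\lfloor(j-1)/2\rfloor}\binom{n}{j-2k-1}$, and the probability becomes $\frac{2}{C(n,d)}\sum_{j}\sum_{k=0}^{\lfloor(j-1)/2\rfloor}\binom{n}{j-2k-1}V_j(C)$ with $j$ effectively ranging up to $d$. Matching this outer range to the bound stated in the theorem, using $V_j(C)=0$ for $j>d$, then gives the assertion.

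The main obstacle I anticipate is not a hard estimate but the careful bookkeeping in this last reindexing: one must keep the three constraints on $(k,s)$ mutually consistent and, in particular, reconcile the natural upper limit of the outer index $j$ with the bound appearing in the statement, so that no term is gained or lost. The supporting points that also require a clean argument are the almost sure dichotomy and the continuity underlying the reduction to polyhedral cones; both are routine but should be stated explicitly.
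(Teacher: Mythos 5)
Your overall route is genuinely different from the paper's. The paper never decomposes $C\cap S_n$ at all: it uses the isotropy of $S_n$ to write $\bP\{C\cap S_n\neq\{o\}\}$ as a double integral $\int_{{\rm SO}_d}\int_{\C^d}{\mathbbm 1}\{C\cap\vartheta D\neq\{o\}\}\,\bQ_{S_n}(\D D)\,\nu(\D\vartheta)$, evaluates the rotation integral by the already available formula (\ref{1.4}) (which holds for \emph{arbitrary} closed convex cones, not both subspaces), and then inserts the expectations $\bE V_i(S_n)=\binom{n}{d-i}C(n,d)^{-1}$ imported from \cite{HS16}. You instead apply Gauss--Bonnet (\ref{1.3}) to $C\cap S_n$ itself and use Theorem~\ref{T1.2} with $A=\R^d$. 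That makes the result a corollary of Theorem~\ref{T1.2}, with no input from \cite{HS16}, which is attractive; but it forces two burdens the paper's proof avoids entirely: Theorem~\ref{T1.2} covers only polyhedral $C$ (hence your approximation step), and you must prove the almost sure dichotomy for $C\cap S_n$ yourself.

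Both burdens, as you discharge them, contain genuine gaps. First, your dichotomy argument is wrong as stated: ruling out lines would indeed suffice, but lines cannot always be ruled out. If $L_C=C\cap(-C)$ is the lineality space of $C$, then almost surely $\dim(L_C\cap H_1\cap\dots\cap H_n)=\max\{0,\dim L_C-n\}$, so for $\dim L_C>n$ (e.g.\ $C$ a closed halfspace in $\R^3$ and $n=1$) the cone $C\cap S_n$ contains a line with probability one. The dichotomy itself is still true, but it needs a different mechanism --- condition on the shape of $S_n$, use its rotation invariance and Lemma~\ref{L3.2} --- which is exactly the isotropy-plus-Fubini device driving the paper's proof; alternatively you could approximate by \emph{pointed} polyhedral cones, for which your line argument does work. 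Second, the two continuity claims behind the polyhedral reduction are asserted, not proved: continuity of the $V_j$ can be extracted from the weak continuity of spherical curvature measures, but continuity of $C\mapsto\bP\{C\cap S_n\neq\{o\}\}$ requires showing that touching configurations form a null set, an additional nontrivial lemma. The paper's proof shows the whole reduction is unnecessary.

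Finally, the bookkeeping you yourself flagged as the main obstacle does break down. Your computation correctly yields
\[
\bP\{C\cap S_n\neq\{o\}\}=\frac{2}{C(n,d)}\sum_{j=1}^{d}\sum_{k=0}^{\lfloor\frac{j-1}{2}\rfloor}\binom{n}{j-2k-1}V_j(C),
\]
which is precisely where the paper's own proof ends (written there as $\sum_{j\ge1}$). But identifying this with the stated sum $\sum_{j=1}^{n}$ via ``$V_j(C)=0$ for $j>d$'' works only when $n\ge d$. When $n<d$, the stated formula omits the terms with $n<j\le d$, whose coefficients are strictly positive, and it is then actually false: for $n=1$, $d=2$ and $C$ a halfplane, two halfplanes in general position always intersect nontrivially, so the probability is $1$, while the stated sum gives $V_1(C)=1/2$. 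So the upper summation limit $n$ in the statement is an error (it should be $d$); your derivation, like the paper's own proof, establishes the correct version, and no reindexing can make it match the printed one for $n<d$.
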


\begin{theorem}\label{T1.4}
If $S_n, T_m$ are stochastically independent isotropic random Schl\"afli cones with parameters $n,m$, respectively, then
$$\bP\{ S_n\cap T_m\not=\{o\}\} = \frac{2}{C(n,d)C(m,d)}  \sum_{k=0}^{\lfloor \frac{d-1}{2}\rfloor} \sum_{p+q=d-2k-1} \binom{n}{p}\binom{m}{q}.$$
\end{theorem}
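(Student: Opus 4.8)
The plan is to express the intersection event through the spherical Gauss--Bonnet relation (\ref{1.3}) and then to evaluate the resulting moments of the conic intrinsic volumes of $S_n\cap T_m$ by conditioning, using only Theorem \ref{T1.2} together with the trivial intrinsic volumes of $\Rd$.

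First I would record the two almost-sure facts that make the argument go through. Since the $n+m$ hyperplanes generating $S_n$ and $T_m$ are independent and isotropic, they are a.s. in general position; its lineality space equals $\lin S_n\cap\lin T_m$, and a case distinction (all $2^{n+m}$ sign patterns being feasible when $n+m\le d$, the lineality being trivial when $n+m>d$) forces $S_n\cap T_m$ a.s. to be full-dimensional or pointed, hence either $\{o\}$ or not a subspace. This licenses applying (\ref{1.3}) pointwise to the random cone $S_n\cap T_m$, exactly as in the derivation of (\ref{1.4}), which gives
\[ \mathbbm{1}\{S_n\cap T_m\neq\{o\}\}=2\sum_{k=0}^{\lfloor(d-1)/2\rfloor}V_{2k+1}(S_n\cap T_m)\quad\text{a.s.} \]
Taking expectations reduces the task to evaluating $\bE V_r(S_n\cap T_m)$ for odd $r$.

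Next I would compute these moments by conditioning on $S_n$. By independence, for a fixed realization $C$ of $S_n$ the conditional law of $C\cap T_m$ is that of the intersection of the deterministic cone $C$ with an isotropic random Schl\"afli cone of parameter $m$, so Theorem \ref{T1.2} with $A=\Rd$ (where $\Phi_k(\cdot,\Rd)=V_k$) yields $\bE[V_r(C\cap T_m)]=\frac{1}{C(m,d)}\sum_{s=0}^{\min\{m,d-r\}}\binom{m}{s}V_{r+s}(C)$. Averaging over $S_n$ and interchanging the finite sum with the expectation leaves only the quantities $\bE V_j(S_n)$, which follow from Theorem \ref{T1.2} applied with $C=A=\Rd$: since $V_i(\Rd)=0$ for $i<d$ and $V_d(\Rd)=1$, only the term of index $d$ survives and $\bE V_j(S_n)=\binom{n}{d-j}/C(n,d)$, with the usual convention $\binom{n}{d-j}=0$ for $d-j>n$.

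Substituting gives $\bE V_r(S_n\cap T_m)=\frac{1}{C(n,d)C(m,d)}\sum_{s=0}^{\min\{m,d-r\}}\binom{m}{s}\binom{n}{d-r-s}$, and the change of indices $q=s$, $p=d-r-s$ rewrites this as $\frac{1}{C(n,d)C(m,d)}\sum_{p+q=d-r}\binom{n}{p}\binom{m}{q}$, the out-of-range binomials vanishing and thereby absorbing the range constraints. Setting $r=2k+1$, summing over $k$ from $0$ to $\lfloor(d-1)/2\rfloor$, and inserting the factor $2$ then produce the asserted formula verbatim; notably, no separate Vandermonde-type identity is needed, since the double sum over $p+q=d-2k-1$ is precisely what falls out. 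The only steps requiring genuine care are the two almost-sure facts (to justify the pointwise use of (\ref{1.3})) and the measurability/Fubini argument underlying the conditioning on $S_n$; everything else is an application of the already established Theorem \ref{T1.2} together with elementary re-indexing.
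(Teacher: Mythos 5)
Your proposal is correct, but it follows a genuinely different route from the paper's. The paper obtains Theorem \ref{T1.4} as a corollary of Theorem \ref{T1.3}: by independence it conditions on $T_m=D$, applies Theorem \ref{T1.3} to the fixed cone $D$ (Theorem \ref{T1.3} itself being proved via isotropy of $S_n$, formula (\ref{1.4}) --- the global rotational kinematic formula combined with Gauss--Bonnet --- and the moments $\bE V_i(S_n)=\binom{n}{d-i}C(n,d)^{-1}$ quoted from \cite{HS16}), and then integrates in $D$ using those moments once more, ending with the same re-indexing you perform. You bypass Theorem \ref{T1.3} and formula (\ref{1.4}) entirely: you apply Gauss--Bonnet (\ref{1.3}) pointwise to the random cone $S_n\cap T_m$ and evaluate $\bE V_r(S_n\cap T_m)$ by conditioning on $S_n$ and using Theorem \ref{T1.2} twice --- once for the conditional expectation, and once with $C=A=\R^d$ to re-derive $\bE V_j(S_n)=\binom{n}{d-j}C(n,d)^{-1}$, so that no appeal to \cite{HS16} is needed. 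The price is the almost-sure dichotomy that $S_n\cap T_m$ is either $\{o\}$ or not a subspace: the paper never needs this fact for a random-random intersection, since its isotropy argument reduces everything to fixed cones under a random rotation, where Lemma \ref{L3.2} applies; your direct general-position argument (the intersection is full-dimensional and contained in a halfspace when $n+m\le d$, and pointed when $n+m\ge d$) is sound, though note that the paper's $\lin$ denotes linear hull, whereas you mean the lineality space. What your route buys: it stays entirely within results proved in the paper (Theorem \ref{T1.2} plus (\ref{1.3})), and it yields along the way the formula $\bE V_r(S_n\cap T_m)=\frac{1}{C(n,d)C(m,d)}\sum_{p+q=d-r}\binom{n}{p}\binom{m}{q}$ for every $r\ge 1$, which is of independent interest. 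What the paper's route buys: brevity, given that Theorem \ref{T1.3} is proved anyway (and for arbitrary closed convex cones $C$, not only polyhedral ones). One small correction: the paper's proof likewise uses no Vandermonde-type identity (that identity appears only as a consistency check in a later section), so this is not a point of contrast.
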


After some preliminaries in the next section, we recall the conic intrinsic volumes and curvature measures in Section \ref{sec3}. Theorem \ref{T1.1} will then be proved in Section \ref{sec4}, and the proofs of Theorems \ref{T1.2}, \ref{T1.3}, \ref{T1.4} follow in Section \ref{sec5}. %In Section \ref{sec6} we use duality to derive from Theorem \ref{T1.4} a modification of Wendel's geometric probability problem, about the origin to be contained in the convex hull of random points. 

\section{Preliminaries}\label{sec2}

We work in the $d$-dimensional real vector space $\R^d$ ($d\ge 2$), with scalar product $\langle\cdot\,,\cdot\rangle$ and induced norm $\|\cdot\|$.  Lebesgue measure on a $k$-dimensional subspace, which will be clear from the context, is denoted by $\lambda_k$. Spherical Lebesgue measure on the unit sphere $\Sd:=\{x\in \R^d:\|x\|= 1\}$ is denoted by $\sigma_{d-1}$. Its total measure is given by the constant
$$ \omega_d =\sigma_{d-1}(\Sd)= \frac{2\pi^{\frac{d}{2}}}{\Gamma\left(\frac{d}{2}\right)}.$$
The standard Gaussian measure on a $k$-dimensional linear subspace $L\subseteq\R^d$ is the probability measure defined by
$$ \gamma_L(A)=\frac{1}{\sqrt{2\pi}^{\,k}}\int_A e^{-\frac{1}{2}\|x\|^2}\,\lambda_k(\D x)$$
for $A\in\B(L)$. We write $\gamma_{\R^d}=\gamma_d$. The following property of Gaussian measures is frequently used. Suppose that $A, A'$ are Borel sets which lie in totally orthogonal subspaces $L,L'$ of $\R^d$. Then
\begin{equation}\label{2.0}
\gamma_d(A+A')= \gamma_{L}(A)\gamma_{L'}(A').
\end{equation}

The orthogonal group ${\rm O}_d$ of $\R^d$ is equipped with its usual topology. By ${\rm SO}_d$ we denote its subgroup of proper rotations, and by $\nu$ the Haar probability measure on ${\rm SO}_d$. If $L\subset \R^d$ is a subspace, we denote by ${\rm SO}_{L}$ the subgroup of ${\rm SO}_d$ that fixes $L^\perp$ pointwise and hence maps $L$ into itself. It is isomorphic to the group of proper rotations of $L$; its Haar probability measure is denoted by $\nu_L$. 

For a topological space $X$, we denote by $\B(X)$ the $\sigma$-algebra of its Borel subsets.

For a Borel set $A\in\B(\Sd)$ and an arbitrary vector $u\in \Sd$, we have
\begin{equation}\label{2.1}
\int_{{\rm SO}_d} {\mathbbm 1}_A(\vartheta u)\,\nu(\D \vartheta)= \frac{\sigma_{d-1}(A)}{\omega_d},
\end{equation}
as follows easily from known uniqueness theorems for invariant measures. As usual, ${\mathbbm 1}_A$ denotes the indicator function of the set $A$. 

A subset $A\subseteq \R^d$ is called {\em conic} if $a\in A$ implies $\lambda a\in A$ for all $\lambda>0$. With every subset $A\subseteq \R^d$ we associate the conic set 
$$ A^+:=\{\lambda a: a\in A,\,\lambda>0\}.$$
For $A\subseteq\Sd$, we then have $A=A^+\cap\Sd$, and the map $A\mapsto A^+$ is a bijection between the subsets of $\Sd$ and the conic subsets of $\R^d$ not containing $o$. 

The {\em conic (Borel) $\sigma$-algebra} of $\Rd$ is defined by
$$ \widehat\B(\R^d):= \{A\in\B(\R^d): A^+=A\}.$$
Similarly, for $\eta\subseteq\R^d\times\R^d$, we write
$$ \eta^+:= \{(\lambda x,\mu y): (x,y)\in\eta,\,\lambda,\mu>0\}.$$
We equip the Cartesian product $\R^d\times\R^d$ with the product topology and define the {\em biconic (Borel) $\sigma$-algebra}
$$ \widehat \B(\R^d\times\R^d):= \{\eta\in\B(\R^d\times\R^d): \eta^+=\eta\}.$$
It is clear that $\widehat\B(\R^d)$ and $\widehat \B(\R^d\times\R^d)$ are $\sigma$-algebras. (The terms `conic $\sigma$-algebra' and `biconic $\sigma$-algebra' were suggested in \cite{AB12}.)

For $A\in\widehat\B(\R^d)$, one sees by using polar coordinates that
\begin{equation}\label{2.3}
\gamma_d(A)= \frac{\sigma_{d-1}(A\cap\Sd)}{\omega_d},
\end{equation}
and that, for arbitrary $x\in\R^d\setminus\{o\}$, we have
\begin{equation}\label{2.4}
\int_{{\rm SO}_d} {\mathbbm 1}_A(\vartheta x)\,\nu(\D\vartheta) =\gamma_d(A).
\end{equation}
This follows from (\ref{2.1}) and (\ref{2.3}), since $ {\mathbbm 1}_A(\vartheta (x/\|x\|))= {\mathbbm 1}_A(\vartheta x)$ for a conic set $A$.

The correspondences $A\leftrightarrow A^+$ and $\sigma_{d-1}(A)/\omega_d=\gamma_d(A^+)$ for $A\subseteq\Sd$ allow one to translate spherical integral geometry into conic integral geometry.

\section{Conic intrinsic volumes and curvature measures}\label{sec3}

In this section, we recall some basic facts about convex cones, conic intrinsic volumes and conic curvature measures. Proofs and references can be found in Glasauer \cite{Gla95}, Schneider and Weil \cite[Sect. 6.5]{SW08}, Amelunxen and Lotz \cite{AL15}. The spherical setting in \cite{Gla95} and \cite{SW08} is easily transferred to the conic setting. We borrow some notation and formulations from the elegant presentation \cite{AL15}.

A subset $C\subseteq\R^d$ is a convex cone if it is nonempty and is closed under vector addition and under multiplication by nonnegative real numbers. For such a cone $C$, we write $L(C)$ for its linear hull. We denote by $\C^d$ the set of closed convex cones in $\R^d$. 

By $\PC^d\subset\C^d$ we denote the subset of polyhedral cones. A cone is polyhedral if it is the intersection of finitely many closed halfspaces, or equal to $\R^d$. For $C\in\C^d$, the dual or polar cone is defined by
$$ C^\circ = \{x\in\R^d: \langle x,y\rangle\le 0\mbox{ for all }y\in C\}.$$
It satisfies $C^\circ\in\C^d$ and $C^{\circ\circ}:=(C^\circ)^\circ=C$.

The nearest-point map (or metric projection) of $C\in\C^d$ is denoted by $\Pi_C$; thus, for $x\in\R^d$, $\Pi_C(x)$ is the unique point in $C$ for which $\|x-\Pi_C(x)\|\le\|x-y\|$ for all $y\in C$. The nearest-point map of a convex cone satisfies the homogeneity property $\Pi_C(\lambda x)=\lambda \Pi_C(x)$ for $x\in\R^d$ and $\lambda\ge 0$ and the Moreau decomposition
$$ \Pi_C(x) + \Pi_{C^\circ}(x) =x,\quad x\in\R^d,$$
with
$$ \langle \Pi_C(x),\Pi_{C^\circ}(x)\rangle =0.$$

Let $C\in\PC^d$ be a polyhedral cone. Then all faces of $C$ are polyhedral cones. We denote by $\F_j(C)$ the set of $j$-dimensional faces of $C$, $j\in\{0,\dots,d\}$ (possibly, $\F_j(C)=\emptyset$), and we write $\F(C):=\bigcup_{j=0}^d \F_j(C)$ for the set of all faces of $C$. 

For a face $F$ of $C$, we denote by $N(C,F)$ the normal cone of $C$ at $F$. The polar cone $C^\circ$ is again polyhedral. For $F\in\F_j(C)$ we have $N(C,F)\in\F_{d-j}(C^\circ)$ and $N(C^\circ,N(C,F))=F$.

A polyhedral cone $C$ induces a decomposition of $\R^d$, according to
\begin{equation}\label{3.1}
\sum_{F\in\F(C)} {\mathbbm 1}_{({\rm relint} F)+N(P,F)}(x)=1\quad\mbox{for all }x\in\R^d.
\end{equation}
Here ${\rm relint}$ denotes the relative interior. 

Let $C\in\PC^d$. To introduce the conic intrinsic volumes of $C$, we first define, for each face $F\in\F(C)$, the number
$$ v_F(C) = \gamma_d(F+N(C,F)).$$
A probabilistic interpretation is often convenient (as suggested, e.g., in \cite{MT14b}). For this, we denote by $\g$ a standard Gaussian random vector in $\R^d$, that is, a random variable (on some probability space) with values in $\R^d$ and distribution $\gamma_d$. Denoting (as before) probability by $\bP$, we then also have
$$ v_F(C)= \bP\{\Pi_C(\g)\in{\rm relint}\,F\}.$$
The {\em conic intrinsic volumes} $V_0(C),\dots,V_n(C)$ of $C$ are now defined by 
$$ V_k(C):= \sum_{F\in\F_k(C)} v_F(C)$$
for $k=0,\dots,d$, with $V_k(C):=0$ if $\F_k(C)=\emptyset$. (Since we consider only cones in this paper, there is no danger of confusion with the intrinsic volumes of convex bodies.) Defining
$$ {\rm skel}_k C:= \bigcup_{F\in\F_k(C)}{\rm relint}\,F$$
for $k=0,\dots,d$, we can simply write
$$ V_k(C) =\bP\{\Pi_C(\g)\in{\rm skel}_kC\}.$$
As a consequence of this, or of the decomposition (\ref{3.1}), the conic intrinsic volumes satisfy
\begin{equation}\label{3.1a}
\sum_{k=0}^d V_k=1.
\end{equation}

For better comparison with existing literature (most notably McMullen \cite{McM75}), we recall the definitions of the internal angle $\beta(o,F)$ of $F\in \F_k(C)$ at $o$ and the external angle $\gamma(F,C)$ of $C$ at $F$, namely
$$ \beta(o,F):= \gamma_{L(F)}(F),\qquad\gamma(F,C):= \gamma_{L(F)^\perp}(N(C,F)).$$
The property (\ref{2.0}) of the Gaussian measure gives
$$ v_F(C)=\gamma_d(F+N(C,F)) =\gamma_{L(F)}(F)\gamma_{L(F)^\perp}(N(C,F)) =\beta(o,F)\gamma(F,C),$$
hence
$$ V_k(C)=\sum_{F\in\F_k(C)}\beta(o,F)\gamma(F,C).$$

The conic intrinsic volumes have local versions, in the form of measures on the measurable space $(\R^d\times\R^d,\widehat\B(\R^d\times\R^d))$. For $C\in\PC^d$, the {\em support measures} $\Omega_0(C,\cdot),\dots,\Omega_d(C,\cdot)$ can be defined by
$$ \Omega_k(C,\eta):= \bP\{\Pi_C(\g)\in{\rm skel}_kC,\,(\Pi_C(\g),\Pi_{C^\circ}(\g))\in\eta\}$$
for $\eta\in\widehat\B(\R^d\times\R^d)$. Due to the decomposition (\ref{3.1}), we have the explicit representation
\begin{equation}\label{3.2} 
\Omega_k(C,\eta)= \sum_{F\in\F_k(C)} \int_F\int_{N(C,F)} {\mathbbm 1}_\eta(x,y)\,\gamma_{L(F)^\perp}(\D y)\,\gamma_{L(F)}(\D x).
\end{equation}
Clearly, $\Omega_k(C,\cdot)$ is a measure on the biconic $\sigma$-algebra $\widehat\B(\R^d\times\R^d)$, and
$$ \Omega_k(C,\R^d\times\R^d) =V_k(C)$$
for $k=0,\dots,d$.

Due to the invariance properties of the Gaussian measure, it follows from (\ref{3.2}) that the functions $\Omega_k$ are ${\rm O}_d$-equivariant, in the following sense. Defining
$$ \vartheta\eta:= \{(\vartheta x,\vartheta y): (x,y)\in\eta\}\quad\mbox{for } \eta\subset\R^d\times\R^d,\,\vartheta\in{\rm O}_d,$$
we have
$$
\Omega_k(\vartheta C,\vartheta \eta)= \Omega_k(C,\eta)
$$
for $C\in\PC^d$, $\eta\in\widehat\B(\R^d\times\R^d)$, and $\vartheta\in{\rm O}_d$.

By marginalization of the support measures, we obtain the {\em curvature measures}. They are defined by
$$ \Phi_k(C,A) := \Omega_k(C,A\times\R^d)$$
%$$ \Psi_k(C,A) := \Omega_k(C,\R^d\times A)$$
for $C\in\PC^d$, $A\in\widehat\B(\R^d)$, and $k=0,\dots,d$. Thus,
$$ \Phi_k(C,A) = \sum_{F\in\F_k(C)} \gamma_{L(F)}(F\cap A)\gamma(F,C).$$
The equivariance property reads
$$ \Phi_k(\vartheta C,\vartheta A) = \Phi_k(C,A)$$
for $C\in\PC^d$, $A\in\widehat\B(\R^d)$, and $\vartheta\in{\rm O}_d$. %$$ \Psi_k(C,A) = \sum_{F\in\F_k(C)} \gamma_{L(F)}(F)\gamma_{L(F)^\perp}(N(C,F)\cap A).$$
The functions $\Phi_k$ are intrinsic, in the following sense. If $A\in\widehat\B(L(C))$, then $\Phi_k(C,A)$ does not depend on the space containing $C$ in which they are computed. In particular, the conic intrinsic volumes of $C$ are independent of the dimension of the ambient space.

We explain the relation to the notions used in the spherical setting in \cite{Gla95} and \cite{SW08}. Let  a nonempty, closed, spherically convex set $K\subseteq\Sd$ and a Borel set $A\in\B(\Sd\times\Sd)$ be given. Define $ K^\vee:= \{\lambda x: x\in K,\,\lambda\ge 0\}$. Then the relation between the spherical support measures $\Theta_0,\dots,\Theta_{d-1}$ considered in \cite{Gla95}, \cite{SW08} and the conic support measures is given by 
$$\Theta_{k-1}(K,\eta)= \Omega_k(K^\vee,\eta^+)\quad\mbox{for }k=1,\dots,d.$$ 
The spherical intrinsic volumes $v_0,\dots, v_{d-1}$ appearing in \cite{Gla95}, \cite{SW08} are related to the conic intrinsic volumes by 
$$v_{k-1}(K)= V_k(K^\vee)\quad\mbox{for } k=0,\dots,d.$$

To deal with the curvature measures, we need a localization of the functional $v_F(C)$. For $C\in\PC^d$, a face $F$ of $C$ and a conic Borel set $A\in\widehat\B(\R^d)$, we define
$$ \varphi_F(C,A):= \bP\{\Pi_C(\g)\in A\cap{\rm relint}\,F\}.$$
Then, for $F\in\F_k(C)$,
\begin{eqnarray}
\varphi_F(C,A) &=& \gamma_d((A\cap F)+N(C,F)) = \gamma_{L(F)}(A\cap F)\gamma_{L(F)^\perp}(N(C,F))\nonumber\\
&=& \Phi_k(F,A)V_{d-k}(N(C,F)),\label{3.8}
\end{eqnarray}
and we have
\begin{equation}\label{3.9} 
\Phi_k(C,A)= \sum_{F\in\F_k(C)} \varphi_F(C,A).
\end{equation}

\vspace{2mm}

We conclude these preparations with some lemmas on the intersections of subspaces or cones, which will be needed in the next section. Let $L,L'\subseteq\R^d$ be subspaces. They are said to be {\em in general position} if
$$ \dim(L\cap L')= \max\{0,\dim L+\dim L'-d\}.$$
They are {\em in special position} if and only if they are not in general position, and this is equivalent to
$$\lin(L\cup L')\not=\R^d\quad\mbox{and}\quad L\cap L'\not=\{o\}.$$

\begin{lemma}\label{L3.1} 
Let $L,L'\subseteq \R^d$ be subspaces. The set of all rotations $\vartheta\in{\rm SO}_d$ for which $L$ and $\vartheta L'$ are in special position has $\nu$-measure zero.
\end{lemma}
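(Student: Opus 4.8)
The plan is to show that the ``bad'' set of rotations lies in a lower-dimensional subvariety of $\SO_d$, so that it is null for the Haar measure $\nu$. First I would dispose of the degenerate cases: if $\dim L + \dim L' \le d$, then general position means $L\cap\vartheta L' = \{o\}$, and if $\dim L + \dim L' > d$, then general position means $\dim(L\cap\vartheta L') = \dim L + \dim L' - d$; in either case special position forces the intersection dimension to strictly exceed the generic value. So it suffices to prove that for each integer $m$ larger than the generic intersection dimension, the set
$$
B_m := \{\vartheta\in\SO_d : \dim(L\cap\vartheta L')\ge m\}
$$
has $\nu$-measure zero, and then take the finite union over the relevant $m$. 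Note that the condition $\lin(L\cup\vartheta L')\ne\R^d$ is equivalent, by passing to orthogonal complements $L^\perp$ and $(\vartheta L')^\perp=\vartheta(L'^\perp)$, to $L^\perp\cap\vartheta(L'^\perp)\ne\{o\}$, so both defining conditions of special position are of the same ``excess intersection'' type and can be handled by the same argument.

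Next I would set up a convenient parametrization. Fix orthonormal bases of $L$ and of $L'$, collect them as the columns of matrices, and observe that $\dim(L\cap\vartheta L')\ge m$ is a condition on the rank of a certain matrix built from $\vartheta$ and these bases (for instance, the matrix whose kernel measures the overlap, or equivalently a matrix of inner products between basis vectors of $L$ and of $\vartheta L'$). Rank dropping below the generic value is the vanishing of all minors of the appropriate size, which is a finite system of real-analytic (indeed polynomial) equations in the entries of $\vartheta$. Since $\SO_d$ is a connected real-analytic manifold, the zero set of a real-analytic function that is not identically zero is a closed set of measure zero with respect to any measure absolutely continuous in local charts; and $\nu$, being the Haar measure, is exactly such a measure. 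Hence $B_m$ is $\nu$-null provided the defining equations do not hold identically on $\SO_d$.

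The main obstacle, and the only genuinely nontrivial point, is therefore to exhibit a single rotation $\vartheta_0\in\SO_d$ for which $L$ and $\vartheta_0 L'$ are in general position; this shows the relevant minors do not vanish identically, so the bad set is a proper analytic subvariety rather than all of $\SO_d$. For this I would argue that general position holds for \emph{almost every} $\vartheta$ by a direct dimension count: choosing $\vartheta$ to send a chosen basis of $L'$ into vectors in sufficiently generic position relative to $L$ is clearly achievable (e.g.\ rotate $L'$ so that a spanning set of $L'$ together with $L$ spans as large a space as possible and meets $L$ as little as possible), and one can write down an explicit $\vartheta_0$ in a well-chosen coordinate system adapted to $L$ and $L'$. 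Once such a $\vartheta_0$ exists, the analyticity argument closes the proof.

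An alternative, which avoids the analytic-variety machinery, is to use the transitivity of $\SO_d$ and disintegrate $\nu$: I would transfer the problem to the Grassmannian by noting that the distribution of $\vartheta L'$ under $\nu$ is the (unique) $\SO_d$-invariant probability measure on the Grassmannian $G(d,\dim L')$, and then invoke the standard fact that, under this invariant measure, a random subspace is in general position with the fixed subspace $L$ almost surely. This reduces Lemma~\ref{L3.1} to a known property of the invariant measure on Grassmannians; the measure-zero statement for the special-position set is then immediate, and again the only content is the nondegeneracy, i.e.\ that general position occurs with positive (hence full) probability.
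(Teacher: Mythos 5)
Your main argument is correct, but it takes a genuinely different route from the paper. Note first that the paper does not prove Lemma \ref{L3.1} at all: it refers to two elementary proofs, \cite[Lemma 4.4.1]{Sch14} and \cite[Lemma 13.2.1]{SW08}, which work directly with the invariance of the Haar measure (Fubini-type arguments and induction over dimensions). Your route is algebraic instead. Its skeleton is sound: your reduction of special position to the single condition $\dim(L\cap\vartheta L')\ge m_0+1$, where $m_0=\max\{0,\dim L+\dim L'-d\}$, is exactly right in both cases; the condition $\dim(L\cap\vartheta L')\ge m$ is equivalent to a rank bound on the $d\times(\dim L+\dim L')$ matrix $[\,P_L \;|\; \vartheta P_{L'}]$ (whose kernel has dimension $\dim(L\cap\vartheta L')$), hence to the vanishing of all minors of a fixed size, which are polynomials in the entries of $\vartheta$; and if one such minor is not identically zero on the connected real-analytic group ${\rm SO}_d$, then its zero set, which contains your set $B_m$, is $\nu$-null. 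The one step you leave vague (``clearly achievable'') is the existence of a single $\vartheta_0$ with $L,\vartheta_0L'$ in general position; this must be nailed down, but it is easy: in coordinates adapted to $L$, choose a coordinate subspace $M$ with $\dim M=\dim L'$ and $\dim(L\cap M)=m_0$, and use the transitivity of ${\rm SO}_d$ on the Grassmannian to get $\vartheta_0L'=M$. What your approach buys is generality and reusability: the same two lines show that \emph{any} genericity failure expressible by polynomial (or real-analytic) equations in $\vartheta$ is $\nu$-null. What the cited elementary proofs buy is self-containedness: they avoid the identity theorem and the null-zero-set property of real-analytic functions, staying entirely within the invariance calculus that the rest of the paper uses.

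Your ``alternative'' argument, however, should be dropped: it is circular. The ``standard fact'' that a subspace distributed according to the invariant measure on the Grassmannian is almost surely in general position with a fixed subspace \emph{is} Lemma \ref{L3.1}, transferred to the Grassmannian via the pushforward of $\nu$ under $\vartheta\mapsto\vartheta L'$. Moreover, the parenthetical ``positive (hence full) probability'' is unjustified: the special-position set is not invariant under the group action in any way that produces a zero--one law (the diagonal ${\rm SO}_d$-action on pairs of subspaces is not transitive; its orbits are parametrized by principal angles), so positive measure would not imply full measure. The passage from ``one good rotation exists'' to ``almost every rotation is good'' is precisely where the analyticity argument (or the elementary arguments of the cited references) is needed and cannot be waved away.
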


Two different elementary proofs can be found in \cite[Lemma 4.4.1]{Sch14} and \cite[Lemma 13.2.1]{SW08}.

Let $C,D\in\PC^d$. The cones {\em intersect transversely}, written $C\pitchfork D$, if 
$$ \dim(C\cap D)=\dim C+\dim D-d \quad\mbox{and} \quad {\rm relint}\,C \cap {\rm relint}\,D \not=\emptyset.$$ 

The following is (part of) Proposition 2.5 in \cite{AL15}.

\begin{lemma}\label{L3.1a}
Let $C,D\in\PC^d$ be polyhedral cones. Every face of $C\cap D$ is of the form $F\cap G$ with $F\in\F(C)$ and $G\in\F(D)$. The normal cones satisfy
\begin{equation}\label{1.2.11}
N(C\cap D,F\cap G)\supseteq N(C,F)+N(D,G).
\end{equation}
If ${\relint}\,F\cap{\rm relint}\,G\not=\emptyset$, then equality holds in $\rm (\ref{1.2.11})$. If $F\pitchfork G$, then 
$$ N(C\cap D,F\cap G)=N(C,F)\oplus N(D,G)$$
is a direct sum.
\end{lemma}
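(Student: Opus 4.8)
The plan is to establish the four assertions in sequence, using throughout the polyhedral inequality description of the cones together with the identity $N(C,F)=C^\circ\cap L(F)^\perp$, which is immediate from the definition of the normal cone at a face. Fix representations $C=\{x:\langle a_i,x\rangle\le 0,\ i\in I\}$ and $D=\{x:\langle b_j,x\rangle\le 0,\ j\in J\}$, so that $C\cap D$ is described by the combined system. For $z\in C$ write $I(z):=\{i\in I:\langle a_i,z\rangle=0\}$ for the set of active constraints, and analogously $J(z)$ for $z\in D$. The workhorse is the standard polyhedral fact that the normal cone at a point is the positive hull of the active outer normals, so that $N(C,z)=\pos\{a_i:i\in I(z)\}$.

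For the first assertion I would take a face $E$ of $C\cap D$ and a point $z\in\relint E$, and let $F$ and $G$ be the faces of $C$ and $D$ containing $z$ in their relative interiors. Writing each of $E$, $F$, $G$ as the solution set of its active equalities, one reads off $E=F\cap G$ directly, since the constraints of $C\cap D$ active at $z$ are precisely those indexed by $I(z)\cup J(z)$. For the inclusion (\ref{1.2.11}) I would first note that for arbitrary faces $F$ of $C$ and $G$ of $D$ the set $F\cap G$ is again a face of $C\cap D$: choosing $u\in\relint N(C,F)$ and $w\in\relint N(D,G)$, both lie in $(C\cap D)^\circ$, and $(C\cap D)\cap(u+w)^\perp=F\cap G$ because $\langle u,x\rangle\le 0$ and $\langle w,x\rangle\le 0$ on $C\cap D$ force both to vanish simultaneously, exhibiting $F\cap G$ as an exposed face. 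The inclusion then follows from the orthogonal-complement description: any $u_1\in N(C,F)\subseteq(C\cap D)^\circ$ and $u_2\in N(D,G)\subseteq(C\cap D)^\circ$ vanish on $F$ and on $G$, hence on $F\cap G$, so $u_1+u_2\in(C\cap D)^\circ\cap L(F\cap G)^\perp=N(C\cap D,F\cap G)$.

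For the equality case, assuming $\relint F\cap\relint G\neq\emptyset$, I would pick $z$ in this intersection; since then $\relint(F\cap G)=\relint F\cap\relint G$, the point $z$ lies in $\relint(F\cap G)$, whence $N(C\cap D,F\cap G)=N(C\cap D,z)$ while $N(C,F)=N(C,z)$ and $N(D,G)=N(D,z)$. The active-normal description now gives $N(C\cap D,z)=\pos\{a_i:i\in I(z)\}+\pos\{b_j:j\in J(z)\}=N(C,z)+N(D,z)$, which is the reverse inclusion. When in addition $F\pitchfork G$, transversality yields $\dim(F\cap G)=\dim F+\dim G-d$, and combined with $\dim N(C,F)=d-\dim F$ this forces $\dim N(C\cap D,F\cap G)=\dim N(C,F)+\dim N(D,G)$; since the left-hand cone already equals the sum $N(C,F)+N(D,G)$, the two summands span complementary subspaces and the sum is direct.

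The step I expect to require the most care is the equality case, and in particular seeing why the relative-interior hypothesis cannot be dropped: for a face $E=F\cap G$ with $\relint F\cap\relint G=\emptyset$, a point $z\in\relint E$ lies in the relative interiors of strictly smaller faces $F'\subsetneq F$, $G'\subsetneq G$, whose larger normal cones make $N(C\cap D,E)=N(C,F')+N(D,G')$ strictly larger than $N(C,F)+N(D,G)$. Making the bookkeeping of active index sets precise, together with the identity $\relint(F\cap G)=\relint F\cap\relint G$ and the correct identification of the face of $C\cap D$ at $z$, is where the argument must be handled carefully; the remaining dimension count in the transversal case is then routine.
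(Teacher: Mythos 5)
Your proof is correct, but there is nothing in the paper to compare it against: Lemma~\ref{L3.1a} is stated there as (part of) Proposition 2.5 of Amelunxen and Lotz \cite{AL15}, and no proof is given in the text. Your argument is a self-contained derivation from standard polyhedral facts: the minimal face of a polyhedron containing a point $z$ is cut out by the constraints active at $z$ (this gives the first assertion and, via $N(C\cap D,z)=\pos\{a_i:i\in I(z)\}+\pos\{b_j:j\in J(z)\}$, the equality case); the identity $N(C,F)=C^\circ\cap L(F)^\perp$ together with $C^\circ,D^\circ\subseteq(C\cap D)^\circ$ (this gives the inclusion (\ref{1.2.11})); the exposedness of faces of polyhedral cones, $F=C\cap u^\perp$ for $u\in\relint N(C,F)$, which you use to verify that $F\cap G$ is a face of $C\cap D$ at all --- a step that is easy to overlook but without which $N(C\cap D,F\cap G)$ is not even defined; the identity $\relint(F\cap G)=\relint F\cap\relint G$ under the nonempty-intersection hypothesis; and the relation $\dim N(C,F)=d-\dim F$, recorded in the paper as $N(C,F)\in\F_{d-\dim F}(C^\circ)$, which yields the direct-sum statement by a dimension count. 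All four assertions are established.

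One error, though it lies outside what the lemma asserts: your closing claim that the inclusion (\ref{1.2.11}) is automatically strict when $\relint F\cap\relint G=\emptyset$ is false. Take $d=2$, let $C=D$ be the first quadrant, $F$ the nonnegative $x$-axis and $G$ the nonnegative $y$-axis. Then $\relint F\cap\relint G=\emptyset$ and $F\cap G=\{o\}$, but $N(C\cap D,F\cap G)=(C\cap D)^\circ=C^\circ$, while $N(C,F)+N(D,G)$ is the sum of the nonpositive $y$-axis and the nonpositive $x$-axis, which is again $C^\circ$: equality holds. The failure occurs exactly at the point you left unjustified: here the minimal faces $F'=G'=\{o\}$ are strictly smaller than $F,G$ and their normal cones strictly larger, yet the sums $N(C,F')+N(D,G')$ and $N(C,F)+N(D,G)$ coincide. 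So the relative-interior hypothesis is sufficient but not necessary for equality; this does not affect the validity of your proof of the stated assertions.
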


For the following, we refer to \cite[Lemma 5.3]{AL15}. 

\begin{lemma}\label{L3.2} 
Let $C,D\in\PC^d$ be polyhedral cones. The set
$$ \{\vartheta\in{\rm SO}_d: C\cap \vartheta D=\{o\} \mbox{ or } C\pitchfork \vartheta D\}$$
has $\nu$-measure one. If $C$ and $D$ are not both subspaces, then the set
$$ \{\vartheta\in{\rm SO}_d: C\cap \vartheta D=\{o\} \mbox{ or } C\cap \vartheta D \mbox{ is not a subspace}\}$$
has $\nu$-measure one. 
\end{lemma}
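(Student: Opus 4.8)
The plan is to prove both assertions of Lemma~\ref{L3.2} by reducing the geometric conditions about cones to conditions about their linear hulls, so that Lemma~\ref{L3.1} (the genericity statement for subspaces) can be applied. Throughout I write $L=L(C)$ and $M=L(D)$ for the linear hulls, and I note at the outset that $\vartheta D$ has linear hull $\vartheta M$ and that $\dim(\vartheta D)=\dim D$.

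For the first statement, the strategy is to find a $\nu$-null set of rotations outside of which $C\cap\vartheta D$ behaves well, and show that off this null set the dichotomy ``$C\cap\vartheta D=\{o\}$ or $C\pitchfork\vartheta D$'' holds. First I would invoke Lemma~\ref{L3.1}: the set of $\vartheta$ for which $L$ and $\vartheta M$ are in special position is $\nu$-null, so for $\nu$-a.e.\ $\vartheta$ the subspaces are in general position, meaning $\dim(L\cap\vartheta M)=\max\{0,\dim L+\dim M-d\}$. I then split into two cases according to this dimension. If $\dim L+\dim M-d\le 0$, then generically $L\cap\vartheta M=\{o\}$, hence $C\cap\vartheta D\subseteq L\cap\vartheta M=\{o\}$, giving the first alternative. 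If $\dim L+\dim M>d$, the harder case, I must show that generically $\relint C\cap\vartheta(\relint D)\ne\emptyset$ and that the intersection has the expected dimension $\dim C+\dim D-d$; these two facts together are exactly $C\pitchfork\vartheta D$. The dimension count for $C\cap\vartheta D$ should follow from general position of $L$ and $\vartheta M$ restricted to the relative interiors, since inside $\lin(L\cup\vartheta M)$ the cones $\relint C$ and $\vartheta(\relint D)$ are relatively open sets of full dimension in $L$ and $\vartheta M$ respectively.

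The main obstacle will be establishing that the relative interiors meet for a generic rotation, i.e.\ that the transversality condition on relative interiors is not destroyed on a positive-measure set. The cleanest route is a measure-theoretic argument: I would show that the ``bad'' set of rotations where $\relint C\cap\vartheta(\relint D)=\emptyset$ but $L\cap\vartheta M\ne\{o\}$ is contained in a lower-dimensional (hence $\nu$-null) subset of $\mathrm{SO}_d$, for instance by observing that failure of the relative interiors to meet forces a separating hyperplane, which in turn pins $\vartheta$ to an algebraic subvariety of the rotation group. Alternatively, one can pick an interior direction $u\in\relint C$ and $v\in\relint D$ and argue that the event $\{\vartheta u\in\relint(\text{appropriate cone})\}$ has full measure by the isotropy relation~(\ref{2.4}); this avoids separating hyperplanes but requires care in packaging the two-sided interior condition. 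I expect this transversality-of-interiors step to absorb most of the work, while the dimension identity is essentially bookkeeping with general position.

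For the second statement, assume $C,D$ are not both subspaces; without loss of generality say $D$ is not a subspace (the symmetric case is identical). The plan is to apply the first part: off a $\nu$-null set, either $C\cap\vartheta D=\{o\}$ or $C\pitchfork\vartheta D$. In the transverse case I would show $C\cap\vartheta D$ is not a subspace. The key observation is that if $C\cap\vartheta D$ were a subspace $U\ne\{o\}$, then $U\subseteq\relint C\cap\vartheta(\relint D)$ would force $U$ to lie in the lineality spaces of both $C$ and $\vartheta D$ (a subspace contained in the relative interior of a cone must be contained in the cone's largest linear subspace); but since $D$ is not a subspace, $\vartheta D$ has lineality space strictly smaller than $\vartheta M$, and transversality forces $U$ to be too large to fit, yielding a contradiction on a full-measure set. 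The argument here is short once the first part is in hand, so the real weight of the lemma rests on the transversality claim in the first statement.
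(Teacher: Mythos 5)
First, a point of comparison: the paper offers no proof of Lemma \ref{L3.2} at all --- it simply cites \cite[Lemma 5.3]{AL15} --- so your argument has to stand on its own, and as written it does not. The fatal step is in your ``harder case'' $\dim L+\dim M>d$, where you propose to show that $\relint C\cap\vartheta(\relint D)\neq\emptyset$ holds for $\nu$-almost every $\vartheta$. That unconditional statement is false, so no refinement of your argument can establish it. Take $d=2$ and let $C$ and $D$ be two narrow, pointed, two-dimensional wedges. Then $L=M=\R^2$, so $L\cap\vartheta M=\R^2\neq\{o\}$ for \emph{every} $\vartheta$, yet $C\cap\vartheta D=\{o\}$ --- and in particular $\relint C\cap\vartheta(\relint D)=\emptyset$ --- for an open set of rotations of positive $\nu$-measure. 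The same example destroys both of your proposed mechanisms: your ``bad'' set $\{\vartheta: \relint C\cap\vartheta(\relint D)=\emptyset \mbox{ and } L\cap\vartheta M\neq\{o\}\}$ has positive measure, and a hyperplane separating the two cones does not pin $\vartheta$ to a subvariety, because separation of the \emph{cones} (as opposed to their linear hulls) is an open, stable condition; likewise, by (\ref{2.4}) the event $\{\vartheta u\in\relint C\}$ for fixed $u$ has measure $\gamma_d(C)$, not measure one. The root of the problem is that the assertion is genuinely conditional --- for a.e.\ $\vartheta$: \emph{if} $C\cap\vartheta D\neq\{o\}$, \emph{then} $C\pitchfork\vartheta D$ --- and your scheme never uses the hypothesis $C\cap\vartheta D\neq\{o\}$, only the strictly weaker $L\cap\vartheta M\neq\{o\}$.

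The missing idea is to use the face structure of $C$ and $D$, not just their linear hulls: apply Lemma \ref{L3.1} simultaneously to all (finitely many) pairs $(L(F),L(G))$ with $F\in\F(C)$, $G\in\F(D)$, and work off the union of the resulting null sets. If now $C\cap\vartheta D\neq\{o\}$, pick $x\neq o$ in the intersection and let $F,G$ be the unique faces with $x\in\relint F$ and $x\in\vartheta(\relint G)$. Since $o\neq x\in L(F)\cap\vartheta L(G)$, general position forces $\dim F+\dim G>d$ and hence $L(F)+\vartheta L(G)=\R^d$. This is what makes transversality provable: if $\relint C\cap\vartheta(\relint D)$ were empty, a hyperplane $H$ properly separating $C$ and $\vartheta D$ would pass through $x$ and therefore contain both $F$ and $\vartheta G$ (a convex set lying in a halfspace with a relative interior point on the bounding hyperplane lies in that hyperplane), giving $L(F)+\vartheta L(G)\subseteq H\neq\R^d$ together with $L(F)\cap\vartheta L(G)\neq\{o\}$, i.e.\ special position of $(L(F),\vartheta L(G))$ --- which has been excluded. (Alternatively one can argue locally at $x$ via tangent cones, whose lineality spaces contain $L(F)$ resp.\ $\vartheta L(G)$.) The dimension count $\dim(C\cap\vartheta D)=\dim C+\dim D-d$ then follows from general position of the pair $(L(C),\vartheta L(D))$, since the relative interiors meet. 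Your sketch of the second assertion --- a nontrivial subspace $U=C\cap\vartheta D$ equals the intersection of the two lineality spaces, and transversality plus general position of these lineality spaces (they are faces) forces both cones to be subspaces --- is essentially sound, but it invokes the first assertion and so inherits the gap. This face-based argument is precisely the one in \cite[Lemma 5.3]{AL15}, which is what the paper cites in place of a proof.
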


The next lemma localizes assertion (2.8) in \cite{AL15}. It follows immediately from (\ref{3.8}) and Lemma \ref{L3.1a}.

\begin{lemma}\label{L3.3}
Let $C,D\in\PC^d$, $A,B\in\widehat\B(\R^d)$, $F\in \F_i(C)$, $G\in\F_j(D)$, and suppose that $i+j=d+k$ with $k>0$. If $F\pitchfork G$, then
$$ \varphi_{F\cap G}(C\cap D,A\cap B) =\Phi_k(F\cap G,A\cap B)V_{d-k}(N(C,F)+N(D,G)).$$
\end{lemma}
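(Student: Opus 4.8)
The plan is to recognize the left-hand side as a single instance of formula (\ref{3.8}), applied to the cone $C\cap D$ and its face $F\cap G$, and then to evaluate the resulting normal cone by invoking the transverse case of Lemma \ref{L3.1a}. The whole argument is a direct substitution once the roles are set up correctly.

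First I would verify that $F\cap G$ is genuinely a $k$-dimensional face of $C\cap D$, so that the symbol $\varphi_{F\cap G}(C\cap D,\cdot)$ is defined and (\ref{3.8}) may be applied with the correct index $k$. That $F\cap G$ is a face follows from the elementary face criterion: if $x,y\in C\cap D$ and some relative interior point of the segment $[x,y]$ lies in $F\cap G$, then, $F$ being a face of $C$, we get $x,y\in F$, and, $G$ being a face of $D$, we get $x,y\in G$, hence $x,y\in F\cap G$. The transversality hypothesis $F\pitchfork G$ then yields $\dim(F\cap G)=\dim F+\dim G-d=i+j-d=k>0$, so that $F\cap G\in\F_k(C\cap D)$.

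Next I would apply (\ref{3.8}) with the cone $C\cap D$ in the role of $C$ and the $k$-face $F\cap G$ in the role of $F$, obtaining
$$\varphi_{F\cap G}(C\cap D,A\cap B)=\Phi_k(F\cap G,A\cap B)\,V_{d-k}(N(C\cap D,F\cap G)).$$
Since $F\pitchfork G$, the last assertion of Lemma \ref{L3.1a} identifies $N(C\cap D,F\cap G)=N(C,F)\oplus N(D,G)=N(C,F)+N(D,G)$, and substituting this into the displayed equation gives exactly the claim.

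There is essentially no obstacle: the statement is immediate from (\ref{3.8}) and Lemma \ref{L3.1a}. The only point deserving a moment's care is the dimensional bookkeeping behind the index $d-k$. One checks that $N(C,F)$ has dimension $d-i$ and $N(D,G)$ has dimension $d-j$; by transversality the sum is direct, so $\dim(N(C,F)+N(D,G))=(d-i)+(d-j)=2d-(i+j)=d-k$. This confirms that $V_{d-k}$ is indeed the top-degree conic intrinsic volume of the $(d-k)$-dimensional cone $N(C,F)+N(D,G)$, precisely as produced by (\ref{3.8}).
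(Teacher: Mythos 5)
Your proof is correct and is exactly the paper's argument: the paper disposes of Lemma \ref{L3.3} with the single remark that it ``follows immediately from (\ref{3.8}) and Lemma \ref{L3.1a}'', which is precisely your substitution of $C\cap D$ and $F\cap G$ into (\ref{3.8}) followed by the identification $N(C\cap D,F\cap G)=N(C,F)\oplus N(D,G)$ from the transverse case of Lemma \ref{L3.1a}. The details you supply (that $F\cap G$ is a $k$-face of $C\cap D$, and the dimension count $\dim(N(C,F)+N(D,G))=d-k$) are exactly the points the paper leaves implicit, and you verify them correctly.
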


\section{Kinematic formulas}\label{sec4}
 
In this section, we prove Theorem \ref{T1.1}. We need not prove here that the integrand, the function $\vartheta\mapsto \Phi_k(C\cap\vartheta D, A\cap\vartheta B)$, is in fact $\nu$-integrable, since this is carried out, in spherical space, in \cite{Gla95} and \cite{SW08}, and can easily be carried over to the conic setting.

As auxiliary tools, we first prove two special cases. These will contain an additional term $f([C,\vartheta D])$. For this, we recall the {\em generalized sine function} of two subspaces $L_1,L_2\subseteq \R^d$. If $\dim L_1+\dim L_2=m\le d$, we choose an orthonormal basis in each $L_i$ and define $[L_1,L_2]$ as the $m$-dimensional volume of the parallelepiped spanned by the union of these bases. If one of the subspaces has dimension zero, then $[L_1,L_2]=1$, by definition. Clearly $[L_1,L_2]$ depends only on $L_1$ and $L_2$ and not on the choice of the bases. If $\dim L_1+\dim L_2 \ge d$, we define $[L_1,L_2]= [L_1^\perp,L_2^\perp]$ (which is consistent if $\dim L_1+\dim L_2=d$). For cones $C,D\in\C^d$, we define $[C,D]:= [L(C),L(D)]$. Let $f:[0,1]\to \R$ be a bounded, measurable function. If $\dim C=i$, $\dim D=j$, then the constant
\begin{equation}\label{4.1} 
c_{ij}(f):= \int_{{\rm SO}_d} f([C,\vartheta D])\,\nu(\D\vartheta), %= \int_{\cL_j} f([L_i,L])\,\nu_j(\D L),\quad L_i\in\cL_i,
\end{equation}
depends only on $i,j,f$. This follows from the invariance properties of the measure $\nu$ and the invariance property $[\vartheta L_1,\vartheta L_2]=[L_1,L_2]$ for $\vartheta\in{\rm SO}_d$. %The last equality sign in (\ref{4.1}) follows from (\ref{2.2}). 
If $f$ is a nonnegative power, then the constant $c_{ij}(f)$ has been computed in \cite[Lemma 4.4]{HSS08}. 

The reason for extending the kinematic formulas (\ref{4.2}) and (\ref{4.4}) by inserting $f([C,\vartheta D])$ lies in the fact that special cases of such extensions are useful in Euclidean integral geometry. For example, formula (\ref{4.4}) below can serve to prove \cite[Lemma 4.4.4]{Sch14} in a direct way. 

The method employed below, of averaging over suitable subgroups of the rotation group, was, in the context of polyhedral integral geometry, apparently first used by Amelunxen and Lotz \cite{AL15}. 

First we treat a special case of the kinematic formula, where the index of $\Phi_k(C\cap\vartheta D,\cdot)$ coincides with $\dim C+\dim D-d$.

\begin{theorem}\label{T4.1a}
Let $C,D\in\PC^d$ be polyhedral cones with $\dim C=i$, $\dim D=j$, where $i+j=d+k>d$. Let $A,B\in\widehat\B(\R^d)$, and let $f:[0,1]\to \R$ be a bounded, measurable function. Then
\begin{equation}\label{4.2}
\int_{{\rm SO}_d} \Phi_k(C\cap\vartheta D, A\cap\vartheta B)f([C,\vartheta D])\,\nu(\D\vartheta) = c_{ij}(f)\,\Phi_i(C,A)\,\Phi_j(D,B).
\end{equation}
\end{theorem}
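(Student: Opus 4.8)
The plan is to first reduce the top-order curvature measure to an ordinary Gaussian measure on the intersection subspace, and then to evaluate the rotation integral by inserting an averaging over the two subgroups $\SO_L$ and $\SO_M$, where $L:=L(C)$ and $M:=L(D)$. By Lemma~\ref{L3.1}, for $\nu$-almost all $\vartheta$ the subspaces $L$ and $\vartheta M$ are in general position, so that $U_\vartheta:=L\cap\vartheta M$ has dimension $i+j-d=k$; and by Lemma~\ref{L3.2} we may assume $C\cap\vartheta D$ is $\{o\}$ or transverse. Since $C\cap\vartheta D\subseteq L\cap\vartheta M=U_\vartheta$, the cone $C\cap\vartheta D$ has dimension at most $k$, and a $k$-dimensional cone $E$ has the single $k$-face $E$ itself, with external angle $\gamma(E,E)=\gamma_{L(E)^\perp}(L(E)^\perp)=1$, whence $\Phi_k(E,\cdot)=\gamma_{L(E)}(E\cap\cdot)$. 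Thus, for $\nu$-almost all $\vartheta$,
\[
\Phi_k(C\cap\vartheta D,A\cap\vartheta B)=\gamma_{U_\vartheta}\big((C\cap A)\cap\vartheta(D\cap B)\big),
\]
both sides vanishing when $\dim(C\cap\vartheta D)<k$ (the left by absence of $k$-faces, the right because the set then lies in a proper subspace of the $k$-dimensional $U_\vartheta$). The same identity holds with $\vartheta$ replaced by $\rho\vartheta\tau$ for all $\rho\in\SO_L$, $\tau\in\SO_M$, because $L\cap\rho\vartheta\tau M=L\cap\rho\vartheta M=\rho U_\vartheta$ is again $k$-dimensional for every such $\rho,\tau$.

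Next I would exploit the invariance of $\nu$. For fixed $\rho\in\SO_L$, $\tau\in\SO_M$ one has $[C,\rho\vartheta\tau D]=[L,\rho\vartheta M]=[L,\vartheta M]$ (using $\tau M=M$ and $\rho L=L$), so $f([C,\vartheta D])$ is unchanged by $\vartheta\mapsto\rho\vartheta\tau$. Averaging this substitution over $\rho$ distributed by $\nu_L$ and $\tau$ by $\nu_M$, and using left/right invariance of $\nu$, the left-hand side of (\ref{4.2}) becomes
\[
\int_{\SO_d} f([L,\vartheta M])\,I(\vartheta)\,\nu(\D\vartheta),\qquad
I(\vartheta):=\int_{\SO_L}\!\int_{\SO_M}\gamma_{\rho U_\vartheta}\big((C\cap A)\cap\rho\vartheta\tau(D\cap B)\big)\,\nu_M(\D\tau)\,\nu_L(\D\rho).
\]
The crux is to show $I(\vartheta)=\gamma_L(C\cap A)\,\gamma_M(D\cap B)$, independently of $\vartheta$. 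I would compute the inner integral by applying the subspace version of the rotation-mean formula (\ref{2.4}) twice. For the $\tau$-integral, write $W:=\rho\vartheta M$ and $U:=\rho U_\vartheta=L\cap W$; then $\rho\vartheta\tau(D\cap B)=\eta\,\rho\vartheta(D\cap B)$ with $\eta:=\rho\vartheta\tau(\rho\vartheta)^{-1}$ ranging over $\SO_W$, and since $(C\cap A)\cap\eta\,\rho\vartheta(D\cap B)\subseteq L\cap W=U$, Tonelli together with (\ref{2.4}) applied in $W$ to the conic set $\rho\vartheta(D\cap B)$ gives
\[
\int_{\SO_M}\gamma_U\big((C\cap A)\cap\rho\vartheta\tau(D\cap B)\big)\,\nu_M(\D\tau)
=\gamma_W\big(\rho\vartheta(D\cap B)\big)\,\gamma_U\big((C\cap A)\cap U\big)
=\gamma_M(D\cap B)\,\gamma_U\big((C\cap A)\cap U\big),
\]
the last step by isometry-invariance of the Gaussian measure under $\rho\vartheta\colon M\to W$. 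The remaining $\rho$-integral of $\gamma_{\rho U_\vartheta}\big((C\cap A)\cap\rho U_\vartheta\big)$ is handled identically: transporting by $\rho$ into $U_\vartheta$ and applying (\ref{2.4}) inside $L$ to the conic set $C\cap A$ produces $\gamma_L(C\cap A)$ (the factor $\gamma_{U_\vartheta}(U_\vartheta)=1$ disappearing). Hence $I(\vartheta)=\gamma_L(C\cap A)\,\gamma_M(D\cap B)=\Phi_i(C,A)\,\Phi_j(D,B)$.

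Pulling this constant out of the outer integral then leaves
\[
\Phi_i(C,A)\,\Phi_j(D,B)\int_{\SO_d}f([L,\vartheta M])\,\nu(\D\vartheta)=c_{ij}(f)\,\Phi_i(C,A)\,\Phi_j(D,B),
\]
which is (\ref{4.2}). The main obstacle I anticipate is purely the organisation of the two-fold averaging: recognising that after inserting $\rho\in\SO_L$ and $\tau\in\SO_M$ the integrand is the Gaussian measure of a \emph{conic} set living in the moving subspace $W$ (so that (\ref{2.4}) applies in $W$, and then in $L$), and keeping the bookkeeping that forces $\rho U_\vartheta$ to stay genuinely $k$-dimensional for all $\rho,\tau$. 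Once this is in place, the factorisation $\gamma_{U_\vartheta}\rightsquigarrow\gamma_L\cdot\gamma_M$ is automatic and the generalized-sine weight $f$ merely rides along to yield $c_{ij}(f)$.
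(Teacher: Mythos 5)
Your proof is correct and follows essentially the same route as the paper: reduce the top-order curvature measure to a Gaussian measure on the $k$-dimensional intersection subspace $L(C)\cap\vartheta L(D)$ (valid for $\nu$-almost all $\vartheta$ by Lemma \ref{L3.1}), then exploit the invariance of $\nu$ by averaging over the subgroups ${\rm SO}_{L(C)}$ and ${\rm SO}_{L(D)}$ and apply the rotation-mean formula (\ref{2.4}) in each of these subspaces, with $f([C,\vartheta D])$ riding along unchanged to produce $c_{ij}(f)$. The only difference is organizational — you insert both subgroup averages simultaneously and evaluate them via conjugation into the moving subspace $\rho\vartheta L(D)$, whereas the paper performs the two substitutions $\vartheta\mapsto\vartheta\rho^{-1}$ and $\vartheta\mapsto\sigma\vartheta$ sequentially — which does not change the substance of the argument.
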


\begin{proof}
It follows from Lemma \ref{L3.1} that for $\nu$-almost all $\vartheta\in{\rm SO}_d$ we have $\dim(L(C)\cap\vartheta L(D))=k$. Therefore,
\begin{eqnarray*}
&& \int_{{\rm SO}_d} \Phi_k(C\cap\vartheta D, A\cap\vartheta B)f([C,\vartheta D])\,\nu(\D\vartheta) \\
&& =  \int_{{\rm SO}_d} \gamma_{L(C)\cap\vartheta L(D)}(C\cap\vartheta D \cap A\cap\vartheta B)f([C,\vartheta D])\,\nu(\D\vartheta)\\
&&=  \int_{{\rm SO}_d} \int_{L(C)\cap\vartheta L(D)} {\mathbbm 1}_{C\cap A}(x){\mathbbm 1}_{\vartheta(D\cap B)}(x)f([C,\vartheta D])\,\gamma_{L(C)\cap \vartheta L(D)}(\D x)\,\nu(\D\vartheta).
\end{eqnarray*}
Here we replace $\vartheta$ by $\vartheta \rho^{-1}$ with $\rho\in {\rm SO}_{L(D)}$ (which satisfies $\rho L(D)=L(D)$ and hence $[C,\vartheta\rho^{-1} D] =[C,\vartheta D]$). This does not change the integral, by the invariance of $\nu$, hence the same holds if we integrate over all $\rho$ with respect to the probability measure $\nu_{L(D)}$. Thus, we obtain, using Fubini's theorem,
\begin{eqnarray*}
&& \int_{{\rm SO}_d} \Phi_k(C\cap\vartheta D, A\cap\vartheta B)f([C,\vartheta D])\,\nu(\D\vartheta) \\
&&= \int_{{\rm SO}_{L(D)}}  \int_{{\rm SO}_d} \int_{L(C)\cap\vartheta L(D)} {\mathbbm 1}_{C\cap A}(x){\mathbbm 1}_{D\cap B}(\rho\vartheta^{-1}x)f([C,\vartheta D])\\
&& \hspace{4mm}\times\,\gamma_{L(C)\cap \vartheta L(D)}(\D x)\,\nu(\D\vartheta)\,\nu_{L(D)}(\D\rho)\\
&&= \int_{{\rm SO}_d}  \int_{L(C)\cap\vartheta L(D)} {\mathbbm 1}_{C\cap A}(x)\left[ \int_{{\rm SO}_{L(D)}} {\mathbbm 1}_{D\cap B}(\rho\vartheta^{-1}x)  \,\nu_{L(D)}(\D\rho)\right] \\
&& \hspace{4mm} \times\,\gamma_{L(C)\cap \vartheta L(D)}(\D x)f([C,\vartheta D])\,\nu(\D\vartheta)\\
&&= \Phi_j(D,B) \int_{{\rm SO}_d}  \int_{L(C)\cap\vartheta L(D)} {\mathbbm 1}_{C\cap A}(x)\,\gamma_{L(C)\cap \vartheta L(D)}(\D x)f([C,\vartheta D])\,\nu(\D\vartheta).
\end{eqnarray*}
To evaluate the integral in brackets, we have applied (\ref{2.4}) in $L(D)$ and used that $ \Phi_j(D,B)=\gamma_{L(D)}(D\cap B)$, since $\dim D=j$. The application of (\ref{2.4}) is possible since $\vartheta^{-1}x\in L(D)$ and $x\not=0$ for $\gamma_{L(C)\cap \vartheta L(D)}$-almost all $x$. 

The latter outer integral does not change if we replace $\vartheta$ by $\sigma\vartheta$ with $\sigma\in{\rm SO}_{L(C)}$. Therefore, and since $L(C)=\sigma L(C)$ and $[C,\sigma\vartheta D]= [\sigma C,\sigma\vartheta D]=[C,\vartheta D]$, we get
\begin{eqnarray*}
&& \int_{{\rm SO}_d} \Phi_k(C\cap\vartheta D, A\cap\vartheta B)f([C,\vartheta D])\,\nu(\D\vartheta) \\
&& = \Phi_j(D,B)  \int_{{\rm SO}_{L(C)}} \int_{{\rm SO}_d} \int_{\sigma(L(C)\cap \vartheta L(D))} {\mathbbm 1}_{C\cap A}(x) \\
&& \hspace*{4mm}\times \gamma_{\sigma(L(C)\cap\vartheta L(D))} (\D x)f([C,\vartheta D])\, \nu(\D\vartheta)\,\nu_{L(C)}(\D\sigma)\\
&& = \Phi_j(D,B)  \int_{{\rm SO}_{L(C)}} \int_{{\rm SO}_d} \int_{L(C)\cap \vartheta L(D)} {\mathbbm 1}_{C\cap A}(\sigma x) \\
&& \hspace*{4mm}\times \gamma_{L(C)\cap\vartheta L(D)} (\D x)f([C,\vartheta D])\, \nu(\D\vartheta)\,\nu_{L(C)}(\D\sigma)\\
&& = \Phi_j(D,B)  \int_{{\rm SO}_d} \int_{L(C)\cap \vartheta L(D)} \left[\int_{{\rm SO}_{L(C)}}{\mathbbm 1}_{C\cap A}(\sigma x)\,\nu_{L(C)}(\D\sigma)\right]\\
&& \hspace*{4mm}\times  \gamma_{L(C)\cap\vartheta L(D)} (\D x)f([C,\vartheta D])\, \nu(\D\vartheta)\\
&& = c_{ij}(f)\Phi_j(D,B) \Phi_i(C,A),
\end{eqnarray*}
where we have used (\ref{2.4}) in $L(C)$. This completes the proof of (\ref{4.2}). 
\end{proof}

Together with the preceding theorem, the following one will be needed for the proof of the general kinematic formula. For $f\equiv 1$, the proof is due to Amelunxen and Lotz \cite{AL15}; we reformulate it here, for completeness and with some additional explanations.

\begin{theorem}\label{T4.b}
Let $C,D\in\PC^d$ be polyhedral cones with $\dim C=i$, $\dim D=j$, where $i+j=d-k<d$. Let $f:[0,1]\to \R$ be a bounded, measurable function. Then
\begin{equation}\label{4.4}
\int_{{\rm SO}_d} V_{d-k}(C+\vartheta D)f([C,\vartheta D])\,\nu(\D\vartheta) = c_{ij}(f)V_i(C)V_j(D).
\end{equation}
\end{theorem}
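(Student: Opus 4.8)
The plan is to adapt the subgroup-averaging argument of Theorem \ref{T4.1a} to the sum $C+\vartheta D$, the new feature being that the relevant subspace $M:=L(C)+\vartheta L(D)$ is generically a direct, but \emph{non-orthogonal}, sum.

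First I would reduce the integrand to a Gaussian measure. By Lemma \ref{L3.1}, for $\nu$-almost all $\vartheta$ the subspaces $L(C)$ and $\vartheta L(D)$ are in general position, so $M=L(C)+\vartheta L(D)$ is a direct sum with $\dim M=i+j=d-k$. Since $C+\vartheta D$ is a polyhedral cone with linear hull $M$, it is full-dimensional in $M$, and hence its top conic intrinsic volume is the solid angle $V_{d-k}(C+\vartheta D)=\gamma_M(C+\vartheta D)$, computed intrinsically in $M$. Writing each $z\in M$ by its unique oblique decomposition $z=P_1z+P_2z$ with $P_1z\in L(C)$ and $P_2z\in\vartheta L(D)$, the directness of the sum gives $z\in C+\vartheta D$ iff $P_1z\in C$ and $P_2z\in\vartheta D$, whence
$$ V_{d-k}(C+\vartheta D)=\int_M {\mathbbm 1}_C(P_1z)\,{\mathbbm 1}_{\vartheta D}(P_2z)\,\gamma_M(\D z).$$

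Next comes the averaging. I would first replace $\vartheta$ by $\vartheta\rho^{-1}$ with $\rho\in{\rm SO}_{L(D)}$; this fixes $M$, the projections $P_1,P_2$, and the factor $[C,\vartheta\rho^{-1}D]=[L(C),\vartheta L(D)]=[C,\vartheta D]$, and only turns ${\mathbbm 1}_{\vartheta D}(P_2z)$ into ${\mathbbm 1}_D(\rho\vartheta^{-1}P_2z)$. Integrating over $\rho$ against $\nu_{L(D)}$ and applying (\ref{2.4}) inside $L(D)$ to the point $\vartheta^{-1}P_2z\in L(D)$ (nonzero for $\gamma_M$-a.e.\ $z$, since $\{P_2z=0\}=L(C)\cap M$ is a proper subspace of $M$) replaces the inner integral by $\gamma_{L(D)}(D)=V_j(D)$. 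Then I would replace $\vartheta$ by $\sigma\vartheta$ with $\sigma\in{\rm SO}_{L(C)}$; here $M\mapsto\sigma M$, $\gamma_{\sigma M}(\sigma\,\cdot)=\gamma_M(\cdot)$, the projection onto $L(C)$ transforms by conjugation as $P_1\mapsto\sigma P_1\sigma^{-1}$, and $[C,\sigma\vartheta D]=[C,\vartheta D]$ again. After the substitution $z=\sigma w$ the integrand becomes ${\mathbbm 1}_C(\sigma P_1 w)$; integrating over $\sigma$ against $\nu_{L(C)}$ and applying (\ref{2.4}) inside $L(C)$ to $P_1 w$ (nonzero a.e.) replaces it by $\gamma_{L(C)}(C)=V_i(C)$, while the surviving $\gamma_M$-integral is just $\gamma_M(M)=1$. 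What remains is $V_i(C)V_j(D)\int_{{\rm SO}_d}f([C,\vartheta D])\,\nu(\D\vartheta)=c_{ij}(f)V_i(C)V_j(D)$, as desired.

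The step I expect to be the main obstacle is conceptual rather than computational: because $M$ is a non-orthogonal direct sum, the Gaussian measure $\gamma_M$ does \emph{not} factor as a product over $L(C)$ and $\vartheta L(D)$, so one cannot split $\gamma_M(C+\vartheta D)$ directly, and the cross term $\langle P_1z,P_2z\rangle$ in $\|z\|^2$ must not be allowed to interfere. The resolution is that the two averagings never require resolving this oblique structure: at each stage (\ref{2.4}) is applied only to the component lying entirely in $L(D)$, respectively $L(C)$, so the Gaussian on $M$ is carried along untouched and collapses to $\gamma_M(M)=1$ only at the very end. The entire effect of the non-orthogonality is absorbed into the generalized sine factor $[C,\vartheta D]$, which is invariant under both subgroup rotations and therefore factors cleanly out as $c_{ij}(f)$. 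The remaining points to dispatch are routine: the almost-everywhere general-position hypothesis (Lemma \ref{L3.1}), the a.e.\ non-vanishing of $P_1z$ and $P_2z$ needed to invoke (\ref{2.4}), and the trivial reduction of the degenerate case $j=0$.
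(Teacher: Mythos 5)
Your proposal is correct and takes essentially the same route as the paper's proof: the paper likewise exploits that $L(C)\oplus\vartheta L(D)$ is generically a (non-orthogonal) direct sum, factors the indicator of $C+\vartheta D$ through the corresponding oblique decomposition, and then performs exactly your two subgroup averagings over ${\rm SO}_{L(D)}$ and ${\rm SO}_{L(C)}$, applying (\ref{2.4}) in each subspace to extract $V_j(D)$ and $V_i(C)$. The only difference is bookkeeping: the paper uses (\ref{2.0}) to lift the integral to all of $\R^d$ (adding $(L(C)+\vartheta L(D))^\perp$ to the cone) so that the rotation invariance of $\gamma_d$ does the work, whereas you stay intrinsically in $M=L(C)+\vartheta L(D)$ and instead track the covariance of $\gamma_M$ and the conjugation of the oblique projections under the subgroup rotations --- an equivalent device.
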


\begin{proof}
By Lemma \ref{L3.1}, we have $L(C)\cap\vartheta L(D) =\{o\}$ and $\dim(L(C)+\vartheta L(D)=i+j=d-k$ for $\nu$-almost all $\vartheta$. Using (\ref{2.0}), we get
\begin{eqnarray*}
V_{d-k}(C+\vartheta D) &=&\int_{L(C)+\vartheta L(D)} {\mathbbm 1}_{C+\vartheta D}(x)\,\gamma_{L(C)+\vartheta L(D)} (\D x)\\
&=& \int_{\R^d} {\mathbbm 1}_{C+\vartheta D +(L(C)+\vartheta L(D))^\perp}(x)\,\gamma_d(\D x).
\end{eqnarray*}
For $\nu$-almost all $\vartheta$, there is a unique decomposition
\begin{equation}\label{NN3} 
x = x_{C,\vartheta} + x_{D,\vartheta}+x_\vartheta
\end{equation}
with $x_{C,\vartheta}\in L(C)$, $x_{D,\vartheta}\in\vartheta L(D)$, $x_\vartheta\in (L(C)+\vartheta L(D))^\perp$, which we call the $\vartheta$-decomposition of $x$. By the uniqueness of the decomposition,
$$ x\in C+\vartheta D +(L(C)+ \vartheta L(D))^\perp \enspace \Leftrightarrow\enspace  x_{C,\vartheta}\in C \mbox{ and } x_{D,\vartheta}\in\vartheta D,$$
hence
\begin{eqnarray}
&& \int_{{\rm SO}_d} V_{d-k}(C+\vartheta D)f([C,\vartheta D])\,\nu(\D\vartheta) \nonumber\\
&& = \int_{{\rm SO}_d} \int_{\R^d} {\mathbbm 1}_{C+\vartheta D +(L(C)+ \vartheta L(D))^\perp}(x)\,\gamma_d(\D x)f([C,\vartheta D])\,\nu(\D\vartheta)\nonumber\\
&& = \int_{{\rm SO}_d} \int_{\R^d} {\mathbbm 1}_C(x_{C,\vartheta}){\mathbbm 1}_{\vartheta D}(x_{D,\vartheta})\,\gamma_d(\D x)f([C,\vartheta D])\,\nu(\D\vartheta).\label{NN1}
\end{eqnarray}

Now let $\rho\in {\rm SO}_{L(C)}$. Applying $\rho$ to both sides of (\ref{NN3}), we get
$$ \rho x = \rho x_{C,\vartheta} + \rho x_{D,\vartheta}+ \rho x_\vartheta $$
with $\rho x_{C,\vartheta}\in L(C)$, $\rho x_{D,\vartheta}\in \rho \vartheta L(D)$, $\rho x_\vartheta\in (L(C)+ \rho\vartheta L(D))^\perp$, because $\rho L(C)=L(C)$. On the other hand, the $\rho\vartheta$-decomposition of $\rho x$ reads
$$ \rho x = (\rho x)_{C,\rho\vartheta} + (\rho x)_{D,\rho\vartheta} + (\rho x)_{\rho\vartheta}$$
with $ (\rho x)_{C,\rho \vartheta} \in L(C)$, $(\rho x)_{D,\rho\vartheta} \in \rho\vartheta L(D)$, $(\rho x)_{\rho \vartheta}\in (L(C)+\rho\vartheta L(D))^\perp$. Thus, the two decompositions are identical.

The integral (\ref{NN1}) does not change if we replace $\vartheta$ by $\rho\vartheta$ and $x$ by $\rho x$, by the invariance properties of $\nu$ and $\gamma_d$. Further, we have $[C,\rho\vartheta D]=[\rho C,\rho \vartheta D] = [C,\vartheta D]$. Therefore,
\begin{eqnarray}
&& \int_{{\rm SO}_d} \int_{\R^d} {\mathbbm 1}_C(x_{C,\vartheta}){\mathbbm 1}_{\vartheta D}(x_{D,\vartheta})\,\gamma_d(\D x)f([C,\vartheta D])\,\nu(\D\vartheta)\nonumber\\
&& =\int_{{\rm SO}_d} \int_{\R^d} {\mathbbm 1}_C((\rho x)_{C,\rho\vartheta}){\mathbbm 1}_{\rho\vartheta D}((\rho x)_{D,\rho\vartheta})\,\gamma_d(\D x)f([C,\rho\vartheta D])\,\nu(\D\vartheta)\nonumber\\
&& =\int_{{\rm SO}_d} \int_{\R^d} {\mathbbm 1}_C(\rho x_{C,\vartheta}){\mathbbm 1}_{\rho\vartheta D}(\rho x_{D,\vartheta})\,\gamma_d(\D x)f([C,\vartheta D])\,\nu(\D\vartheta)\nonumber\\
&& = \int_{{\rm SO}_L(C)} \int_{{\rm SO}_d} \int_{\R^d} {\mathbbm 1}_C(\rho x_{C,\vartheta}){\mathbbm 1}_{\vartheta D}(x_{D,\vartheta})\,\gamma_d(\D x)f([C,\vartheta D])\,\nu(\D\vartheta)\,\nu_{L(C)}(\D \rho)\nonumber\\
&&= \int_{{\rm SO}_d} \int_{\R^d} \left[ \int_{{\rm SO}_{L(C)}} {\mathbbm 1}_C(\rho x_{C,\vartheta})\,\nu_{L(C)}(\D \rho))\right] {\mathbbm 1}_D(\vartheta^{-1} x_{D,\vartheta})\,\gamma_d(\D x)f([C,\vartheta D])\,\nu(\D \vartheta) \nonumber\\
&&= V_i(C) \int_{{\rm SO}_d} \int_{\R^d} {\mathbbm 1}_{D}(\vartheta ^{-1} x_{D,\vartheta})\,\gamma_d(\D x)f([C,\vartheta D])\,\nu(\D\vartheta).\label{NN4}
\end{eqnarray}
We  have applied (\ref{2.4}) in $L(C)$, which is possible since $x_{C,\vartheta}\in L(C)$ and $x_{C,\vartheta}\not= o$ for $\nu$-almost all $\vartheta$ and $\gamma_d$-almost all $x$.

Let $\sigma\in{\rm SO}_{L(D)}$. The integral in (\ref{NN4}) does not change if we replace $\vartheta$ by $\vartheta\sigma^{-1}$. Since $\sigma^{-1}L(D)=L(D)$, we have $ x_{D,\vartheta \sigma^{-1}}=x_{D,\vartheta}$. Further, $[C,\vartheta\sigma^{-1}D]=[C,\vartheta D]$. Therefore, we obtain
\begin{eqnarray}
&&  \int_{{\rm SO}_d} \int_{\R^d} {\mathbbm 1}_{D}(\vartheta^{-1} x_{D,\vartheta})\,\gamma_d(\D x) f([C,\vartheta D])\,\nu(\D\vartheta)\nonumber\\
&& = \int_{{\rm SO}_{L(D)}}  \int_{{\rm SO}_d} \int_{\R^d} {\mathbbm 1}_{D}(\sigma \vartheta ^{-1}x_{D,\vartheta\sigma^{-1}})\,\gamma_d(\D x)f([C,\vartheta \sigma^{-1} D])\,\nu(\D\vartheta)\,\nu_{L(D)}(\D \sigma)\nonumber\\
&& = \int_{{\rm SO}_d} \int_{\R^d} \left[ \int_{{\rm SO}_{L(D)}}  {\mathbbm 1}_{D}(\sigma \vartheta^{-1} x_{D,\vartheta})\,\nu_{L(D)}(\D \sigma)\right]\gamma_d(\D x)f([C,\vartheta D])\,\nu(\D\vartheta)\nonumber\\
&& = c_{ij}(f)V_j(D),\label{NN5}
\end{eqnarray}
where we have used (\ref{2.4}) in $L(D)$, which is possible since $\vartheta^{-1} x_{D,\vartheta}\in L(D)$. The results (\ref{NN1}), (\ref{NN4}), (\ref{NN5}) together complete the proof.
\end{proof}

\vspace{2mm}

\noindent{\em Proof of Theorem} \ref{T1.1}

\vspace{2mm}

By (\ref{3.9}),
$$ \Phi_k(C\cap\vartheta D,A\cap\vartheta B) =\sum_{J\in\F_k(C\cap\vartheta D)} \varphi_J(C\cap\vartheta D,A\cap\vartheta B).$$
By Lemmas \ref{L3.1a} and \ref{L3.2} (applied to all pairs of faces of $C$ and $D$), it holds for $\nu$-almost all $\vartheta\in{\rm SO}_d$ that each $k$-face $J$ of $C\cap \vartheta D$ is of the form $J=F\cap \vartheta G$ with $F\in\F_i(C)$, $G\in\F_j(D)$, $i+j=d+k$, and $F\pitchfork \vartheta G$, that is, $F$ and $\vartheta G$ intersect transversely. Therefore,
\begin{eqnarray*}
& & \int_{{\rm SO}_d} \Phi_k(C\cap \vartheta D,A\cap\vartheta B)\,\nu(\D\vartheta) \\
&& = \sum_{i+j=k+d} \sum_{F\in\F_i(C)} \sum_{G\in\F_j(D)} \int_{{\rm SO}_d} \varphi_{F\cap\vartheta G}(C\cap \vartheta D,A\cap\vartheta B){\mathbbm 1}\{F\pitchfork \vartheta G\}\,\nu(\D\vartheta).
\end{eqnarray*}
Hence, if we show that
$$
\int_{{\rm SO}_d} \varphi_{F\cap\vartheta G}(C\cap \vartheta D,A\cap\vartheta B){\mathbbm 1}\{F\pitchfork \vartheta  G\}\,\nu(\D\vartheta) 
= \varphi_F(C,A)\varphi_G(D,B),
$$
then the proof is complete. 

By Lemmas \ref{L3.2} and \ref{L3.3}, for $\nu$-almost all $\vartheta\in{\rm SO}_d$ we have
\begin{eqnarray*} 
&& \varphi_{F\cap\vartheta G}(C\cap\vartheta G,A\cap\vartheta B){\mathbbm 1}\{F\pitchfork \vartheta G\}\\
&&  =\Phi_k(F\cap \vartheta G,A\cap\vartheta B)V_{d-k}(N(C,F)+\vartheta N(D,G))
\end{eqnarray*}
(note that $V_{d-k}(N(C,F)+\vartheta N(D,G))=0$ if $\dim (F\cap \vartheta G)>k$).
Thus, we have to prove that
\begin{eqnarray}\label{4.7}
I &:= &\int_{{\rm SO}_d} \Phi_k(F\cap \vartheta G,A \cap\vartheta B)V_{d-k}(N(C,F)+\vartheta N(D,G))\,\nu(\D\vartheta)\nonumber\\
&\hspace{3pt}=& \varphi_F(C,A)\varphi_G(D,B).
\end{eqnarray}
For the proof, let $F\in\F_i(C)$, $G\in\F_j(D)$ with $i+j=d+k>d$ be given. In the following, we first replace $\vartheta$ by $\rho\vartheta$ with $\rho\in {\rm SO}_{L(F)}$ (noting that $N(C,F)=\rho N(C,F)$), which does not change the integral, then integrate over all $\rho$ with respect to $\nu_{L(F)}$, and use the ${\rm SO}_d$-invariance of the $V_i$, and Fubini's theorem. We obtain
\begin{eqnarray*}
I &=& \int_{{\rm SO}_{L(F)}}  \int_{{\rm SO}_d}\Phi_k(F\cap \rho\vartheta G,A\cap\rho\vartheta B)V_{d-k}(N(C,F)+\rho\vartheta N(D,G))\\
&& \times\;\nu(\D\vartheta)\,\nu_{L(F)}(\D\rho)\\
&=& \int_{{\rm SO}_{L(F)} } \int_{{\rm SO}_d} \Phi_k(F\cap \rho\vartheta G, A\cap\rho\vartheta B)V_{d-k}(N(C,F)+\vartheta N(D,G))\\
&& \times\;\nu(\D\vartheta)\,\nu_{L(F)}(\D\rho)\\
%&=&  \int_{{\rm SO}_d} \int_{{\rm SO}_{L(F)}}   V_k(\rho^{-1}F\cap \vartheta G)V_{d-k}(N(C,F)+\vartheta N(D,G))\,\nu_{L(F)}(\D\rho)\,\nu(\D\vartheta)\\
&=& \int_{{\rm SO}_d} \left[\int_{{\rm SO}_{L(F)}} \Phi_k(F\cap \rho\vartheta G,A\cap\rho\vartheta B) \,\nu_{L(F)}(\D\rho)\right]\\
&& \times\; V_{d-k}(N(C,F)+\vartheta N(D,G))\, \nu(\D\vartheta).
\end{eqnarray*}
Denoting the integral in brackets by $[\cdot]$, we have 
$$ [\cdot] = \int_{{\rm SO}_{L(F)}} \Phi_k(F\cap \rho(\vartheta G\cap L(F)),A\cap\rho(\vartheta B\cap L(F))) \,\nu_{L(F)}(\D\rho).$$
(Note that $\Phi_k(C,A)= \Phi_k(C,A\cap C)$ and $\rho L(F)=L(F)$.) If $\dim (\vartheta G\cap L(F)) =k$, we can apply (\ref{4.2}) (with $f=1$) in $L(F)$ and get
$$ [\cdot] =\Phi_i(F,A)\Phi_k(\vartheta G\cap L(F), \vartheta B).$$
If  $\dim (\vartheta G\cap L(F)) <k$, this equation also holds, since both sides are zero. Thus, we obtain
\begin{eqnarray*}
I &=& \Phi_i(F,A) \int_{{\rm SO}_d} \Phi_k(\vartheta G\cap L(F),\vartheta B) V_{d-k}(N(C,F)+\vartheta N(D,G))\,\nu(\D \vartheta)\\
&=& \Phi_i(F,A) \int_{{\rm SO}_d} \Phi_k(G\cap \vartheta^{-1} L(F),B) V_{d-k}(N(C,F)+\vartheta N(D,G))\,\nu(\D \vartheta),
\end{eqnarray*}
by the ${\rm SO}_d$-equivariance of $\Phi_k$. 

The latter integral can be treated in a similar way, replacing $\vartheta$ by $\vartheta\sigma$ with $\sigma\in{\rm SO}_{L(G)}$ (and thus satisfying $\sigma N(D,G)=N(D,G)$ and $\sigma L(G)=L(G)$), and integrating over all $\sigma$ with respect to $\nu_{L(G)}$. In this way, and again using the ${\rm SO}_d$-equivariance of $\Phi_k$, we obtain
\begin{eqnarray*}
I &=& \Phi_i(F,A) \int_{{\rm SO}_{L(G)}} \int_{{\rm SO}_d} \Phi_k(\sigma G\cap \vartheta^{-1}L(F),\sigma B)V_{d-k}(N(C,F)+\vartheta N(D,G))\\
&&\times\; \nu(\D\vartheta)\,\nu_{L(G)}(\D\sigma)\\
&=& \Phi_i(F,A) \int_{{\rm SO}_d} \left[ \int_{{\rm SO}_{L(G)}} \Phi_k(\sigma G\cap \vartheta^{-1}L(F),\sigma B)\,\nu_{L(G)}(\D\sigma) \right]\\
&&  \times\; V_{d-k}(N(C,F)+\vartheta N(D,G))\,\nu(\D\vartheta).
\end{eqnarray*}
The integral in brackets, denoted  by $[\cdot]$, can be written as
$$ [\cdot]= \int_{{\rm SO}_{L(G)}} \Phi_k(\vartheta^{-1}L(F)\cap L(G)\cap \sigma G,\R^d\cap\sigma B)\,\nu_{L(G)}(\D\sigma).$$
For $\nu$-almost all $\vartheta$ we have $\dim(\vartheta^{-1}L(F)\cap L(G))=k$ and hence can apply (\ref{4.2}) in $L(G)$, to obtain
$$ [\cdot] = \Phi_k(\vartheta^{-1}L(F)\cap L(G),\R^d)\Phi_j(G,B) = \Phi_j(G,B).$$
Thus, we arrive at
\begin{eqnarray*}
I &=& \Phi_i(F,A)\Phi_j(G,B) \int_{{\rm SO}_d}V_{d-k}(N(C,F)+\vartheta N(D,G))\,\nu(\D\vartheta)\\
&=&\Phi_i(F,A)\Phi_j(G,B) V_{d-i}(N(C,F))V_{d-j}(N(D,G))\\
& =& \varphi_F(C,A)\varphi_G(D,B),
\end{eqnarray*}
where we have applied (\ref{4.4}) (with $f=1$). We have shown (\ref{4.7}), which finishes the proof.

\section{Random Schl\"afli cones}\label{sec5}

The notion of random cones requires a (Borel) $\sigma$-algebra on the set $\C^d$ of closed convex cones. For this, we introduce a topology. For $C,D\in\C^d$, let $\Delta(C,D) := \delta(C\cap B^d,D\cap B^d)$, where $B^d$ is the unit ball of $\Rd$ with center at the origin $o$, and where $\delta$ denotes the ordinary Hausdorff metric on convex bodies of $\R^d$. The induced topology coincides with the trace topology of the Fell topology on closed sets, as can be seen by combining \cite[Thm. 12.2.2]{SW08} and \cite[Thm. 1.8.8]{Sch14}. The Borel $\sigma$-algebra $\B(\C^d)$ is now well-defined.

We recall a class of random polyhedral cones, which have been introduced by Cover and Efron \cite{CE67} and studied more thoroughly in \cite{HS16}. Let $G(d,d-1)$ denote the Grassmannian of $(d-1)$-dimensional linear subspaces of $\R^d$. Let $H_1,\dots,H_n\in G(d,d-1)$ be in general position, that is, any $k\le d$ of the subspaces have an intersection of dimension $d-k$. The {\em Schl\"afli cones} induced by $H_1,\dots,H_n$ are the closures of the components of $\R^d\setminus\bigcup_{i=1}^n H_i$. Generally, we denote by $\F_d(H_1,\dots,H_n)$ the set of $d$-dimensional cones of the tessellation of $\R^d$ induced by the hyperplanes $H_1,\dots,H_n$.

We denote by $\sigma:= \sigma_{d-1}/\omega_d$ the normalized spherical Lebesgue measure and by $\sigma^*$ its image measure under the mapping $u\mapsto u^\perp$ from $\Sd$ to $G(d,d-1)$. Let $n\in\N$, and let $\Ha_1,\dots,\Ha_n$ be independent random hyperplanes with distribution $\sigma^*$. With probability one, these hyperplanes are in general position. The {\em isotropic random Schl\"afli cone} with parameter $n$, denoted by $S_n$, is obtained by selecting at random, with equal chances, one of the Schl\"afli cones induced by $H_1,\dots,H_n$. As made precise in \cite{HS16}, $S_n$ is a random cone with distribution given by
\begin{equation}\label{5.1}
\bP\{S_n\in B\} = \int_{(G(d,d-1))^n} \frac{1}{C(n,d)} \sum_{C\in\F_d(H_1,\dots,H_n)} {\mathbbm 1}_B(C)\,\sigma^{*n}(\D(H_1,\dots,H_n))
\end{equation}
for $B\in\B(\C^d)$ (with $C(n,d)$ given by (\ref{1.0})). In analogy to common terminology for stationary tessellations, the random Schl\"afli cone could also be called the {\em typical cone} of the tessellation induced by $\Ha_1,\dots,\Ha_n$.

\newpage

\noindent{\em Proof of Theorem} \ref{T1.2}

Let $C\in\PC^d$ be a polyhedral cone and let $A\in\widehat\B(\R^d)$. Let $n\in \N$ and $k\in\{1,\dots,d\}$. Theorem \ref{T1.2} states that
$$ \bE\Phi_k(C\cap S_n,A) = \frac{1}{C(n,d)} \sum_{s=0}^{\min\{n,d-k\}} \binom{n}{s} \Phi_{k+s}(C,A).$$

First we note that the measurability of the function $\Phi_k(C\cap S_n,A)$ follows from the facts that $S_n$, as defined in \cite{HS16}, is a random cone, the intersection mapping is upper semicontinuous (\cite[Thm. 12.2.6]{SW08}), and $\Phi_k(\cdot,A)$ is measurable, as can be deduced from \cite[Thm. 6.5.2]{SW08}. 

In order to use (\ref{5.1}), we write a linear hyperplane $H$ in the form $H=u^\perp$ with $u\in\Sd$. Then one of the two closed halfspaces bounded by $H$ is given by
$$ u^- := \{x\in\Rd:\langle x,u\rangle \le 0\}.$$
Let $u_1,\dots,u_n\in\Sd$. If the hyperplanes $u_1^\perp,\dots,u_n^\perp$ are in general position, then the Schl\"afli cones induced by these hyperplanes are precisely the cones different from $\{o\}$ of the form
$$ \bigcap_{i=1}^n \epsilon_iu_i^-,\quad \epsilon_i=\pm 1.$$

Let $f$ by a nonnegative measurable function $f$ on polyhedral cones satisfying $f(\{o\})=0$. By (\ref{5.1}), we have
\begin{eqnarray*} 
\bE f(S_n) &=& \int_{(G(d,d-1))^n} \frac{1}{C(n,d)} \sum_{C\in\F_d(H_1,\dots,H_n)} f(C)\,\sigma^{*n}(\D(H_1,\dots,H_n))\\
&=& \frac{1}{C(n,d)} \int_{(\Sd)^n} \sum_{\epsilon_1,\dots,\epsilon_n=\pm 1} f((\epsilon_1u_1)^-\cap\dots\cap (\epsilon_nu_n)^-)\,\sigma^n(\D(u_1,\dots,u_n))\\
&=& \frac{2^n}{C(n,d)} \int_{(\Sd)^n}  f(u_1^-\cap\dots\cap u_n^-)\,\sigma^n(\D(u_1,\dots,u_n)),
\end{eqnarray*}
since the hyperplanes $u_1^\perp,\dots,u_n^\perp$ are in general position for $\sigma^n$-almost all $(u_1,\dots,u_n)$ and the measure $\sigma$ is invariant under reflection in the origin.

Applying the preceding to the function $f= \Phi_k(C\cap \cdot\,,A)$, we obtain
$$ \bE\Phi_k(C\cap S_n,A)= \frac{2^n}{C(n,d)} \int_{(\Sd)^n}  \Phi_k(C\cap u_1^-\cap\dots\cap u_n^-,A)\,\sigma^n(\D(u_1,\dots,u_n)). $$

We set
$$ I(j,m):= \int_{(\Sd)^j} \Phi_m(C\cap\cap u_1^-\cap\dots\cap u_j^-,A)\,\sigma^j(\D(u_1,\dots,u_j))$$
for $j=0,\dots,n$ and $m\ge k$, with $I(0,m):= \Phi_m(C,A)$ and $\Phi_m(C,A):=0$ for $m>\dim C$. Then we can write
$$ I(n,k)= \int_{(\Sd)^{n-1}} \int_{\Sd} \Phi_k(C'\cap u^-,A)\,\sigma(\D u)\,\sigma^{n-1}(\D(u_1,\dots,u_{n-1}))$$
with $C':= C\cap u_1^-\cap\dots\cap u_{n-1}^-$.
With a fixed closed halfspace $H^-$ (bounded by some $H\in G(d,d-1)$), we have
\begin{eqnarray*}
\int_{\Sd} \Phi_k(C'\cap u^-,A)\,\sigma(\D u) &=&
\int_{{\rm SO}_d} \Phi_k(C'\cap\vartheta H^-,A)\,\nu(\D\vartheta)\\
&= &\sum_{i=k}^d \Phi_i(C',A)V_{d+k-i}(H^-)\\
&=& \Phi_k(C',A)\cdot\frac{1}{2} + \Phi_{k+1}(C',A)\cdot\frac{1}{2},
\end{eqnarray*}
where we have used the kinematic formula of Theorem \ref{T1.1} and the fact that
$$ V_j(H^-) = \left\{ \begin{array}{ll} 0 & \mbox{if } j\le d-2,\\ 1/2 & \mbox{if } j=d-1,d,\end{array}\right.$$
as follows from the definition of the conic intrinsic volumes. Therefore,
$$ I(n,k) = \frac{1}{2}[I(n-1,k)+I(n-1,k+1)].$$
Now an induction argument gives
$$ I(n,k) = \frac{1}{2^n} \sum_{s =0}^n \binom{n}{s} I(0,k+s) = \frac{1}{2^n} \sum_{s = 0}^{\min\{n,d-k\}} \binom{n}{s} \Phi_{k+s}(C,A).$$
This yields the assertion. \hfill$\Box$

\vspace{2mm}

The proof of Theorem \ref{T1.3} rests on the facts that $S_n$ is isotropic, that is $S_n$ and $\vartheta S_n$ have the same distribution, for each $\vartheta\in{\rm SO}_d$, and that we have an explicit formula for the expectations of its conic intrinsic volumes. Let $\bQ_{S_n}$ be the distribution of $S_n$. Since 
$$ \bP\{C\cap S_n\not=\{o\}\} =\int_{\C^d} {\mathbbm 1}\{C\cap\vartheta D\not=\{o\}\}\,\bQ_{S_n}(\D D)$$
for each $\vartheta\in{\rm SO}_d$, we get
\begin{eqnarray*}
\bP\{C\cap S_n\not=\{o\}\} &=& \int_{{\rm SO}_d} \int_{\C^d} {\mathbbm 1}\{C\cap\vartheta D\not=\{o\}\}\,\bQ_{S_n}(\D D)\,\nu(\D\vartheta)\\
&=&  \int_{\C^d}\int_{{\rm SO}_d} {\mathbbm 1}\{C\cap\vartheta D\not=\{o\}\}\,\nu(\D\vartheta)\,\bQ_{S_n}(\D D)\\
&=&  \int_{\C^d} 2\sum_{k= 0}^{\lfloor\frac{d-1}{2}\rfloor} \sum_{i=2k+1}^d V_{d+2k+1-i}(C)V_i(D) \,\bQ_{S_n}(\D D)\\
&=&  2\sum_{k= 0}^{\lfloor\frac{d-1}{2}\rfloor} \sum_{i=2k+1}^d V_{d+2k+1-i}(C)\, \bE V_i(S_n),
\end{eqnarray*}
where (\ref{1.4}) was used. The expected conic intrinsic volumes of random Schl\"afli cones have been determined in \cite{HS16}. For $i=1,\dots,d$, we have (loc. cit., Corollary 4.3)
$$ \bE V_i(S_n) =\binom{n}{d-i} C(n,d)^{-1}.$$
This gives
$$
\bP\{C\cap S_n\not=\{o\}\} =  \frac{2}{C(n,d)} \sum_{j\ge 1} a_{n,j} V_j(C)
$$
with
$$ a_{n,j} = \sum_{k= 0}^{\lfloor\frac{j-1}{2}\rfloor} \binom{n}{j-2k-1},$$
which completes the proof of Theorem \ref{T1.3}

\vspace{2mm}

Now let $S_n$ and $T_m$ be two stochastically independent random Schl\"afli cones, with parameters $n$ and $m$ and distributions $\bQ_{S_n},\bQ_{T_m}$, respectively. By the independence of $S_n$ and $T_m$ we have
\begin{eqnarray*}
\bP \{S_n\cap T_m\not=\{o\}\} &=& \int_{C^d} \int_{\C^d} {\mathbbm 1}\{C\cap D\not=\{o\}\}\,\bQ_{S_n}(\D C)\,\bQ_{T_m}(\D D)\\
&=& \int_{C^d} \bE{\mathbbm 1}\{S_n\cap D\not=\{o\}\}\,\bQ_{T_m}(\D D)\\
&=& \int_{\C^d} \frac{2}{C(n,d)} \sum_{j\ge 1} a_{n,j} V_j(D)\,\bQ_{T_m}(\D D)\\
&=& \frac{2}{C(n,d)} \sum_{j\ge 1} a_{n,j} \,\bE V_j(T_m)\\
&=& \frac{2}{C(n,d)C(m,d)} \sum_{j\ge 1} \binom{m}{d-j}\sum_{k= 0}^{\lfloor\frac{j-1}{2}\rfloor} \binom{n}{j-2k-1}\\
&=& \frac{2}{C(n,d)C(m,d)} \sum_{k=0}^{\lfloor\frac{d-1}{2}\rfloor} \sum_{p+q=d-2k-1} \binom{n}{p}\binom{m}{q}.
\end{eqnarray*}
This is the assertion of Theorem \ref{T1.4}.

\vspace{3mm}

\noindent Author's address:\\[2mm]
Rolf Schneider\\
Mathematisches Institut, Albert-Ludwigs-Universit{\"a}t\\
D-79104 Freiburg i. Br., Germany\\
E-mail: rolf.schneider@math.uni-freiburg.de

\end{document}